\tikzset{>=stealth}
\numberwithin{equation}{section}
\newcommand{\fanshu}[2][0]{\Vert#2\Vert_{#1}}%norm
\newcommand{\banfan}[2][0]{|#2|_{#1}}%ban fanshu
\newcommand{\mesh}{\mathcal{T}_h}%mesh decomposition
\newcommand{\face}{\mathcal{F}_h}%mesh faces
\newcommand{\curl}{\nabla\times}
\newcommand{\curlc}{(\nabla\times)^2}
\newcommand{\gcurl}{\nabla\nabla\times}
\newcommand{\divv}{\nabla\cdot}
\newcommand{\jump}[2][]{[\![#2]\!]_{#1}}%jump
\newcommand{\norma}[1]{\vert\!\vert\!\vert#1\vert\!\vert\!\vert_{h}}
\newtheorem{theorem}{\noindent{\bf Theorem}}[section]
\newtheorem{lemma}{\noindent{Lemma}}[section]
\theoremstyle{remark}
\newtheorem{remark}{\noindent{Remark}}[section]
\begin{document}\large % 正文开始 (the body of the article begins)
% 标题 (title)
\title{{\large  \textbf{A new family of nonconforming elements with $\pmb{H}(\mathrm{curl})$-continuity for the three-dimensional quad-curl problem}}}% 文章标题名 (full title name of the article)
\author{\small{Baiju Zhang}\thanks{Beijing Computational Science Research Center, Beijing 100193, China, \tt baijuzhang@csrc.ac.cn} \and \small{Zhimin Zhang}\thanks{Beijing Computational Science Research Center, Beijing 100193, China, {\tt zmzhang@csrc.ac.cn}; Department of Mathematics, Wayne State University, Detroit, MI 48202, USA, \tt ag7761@wayne.edu}}
\date{}\maketitle % (set up the part of title)

\begin{abstract}
We propose and analyze a new family of nonconforming finite elements for the three-dimensional quad-curl problem. The proposed finite element spaces are subspaces of $\pmb{H}(\mathrm{curl})$, but not of $\pmb{H}(\mathrm{grad}~\mathrm{curl})$, which are different from the existing nonconforming ones \cite{Hong2012DGFourthOrderCurl,huang2022nonconforing,hu2022afamily}. The well-posedness of the discrete problem is proved and optimal error estimates in discrete $\pmb{H}(\mathrm{grad}~\mathrm{curl})$ norm, $\pmb{H}(\mathrm{curl})$ norm and $\pmb{L}^2$ norm are derived. Numerical experiments are provided to illustrate the good performance of the method and confirm our theoretical predictions.
\end{abstract}

\section{Introduction}
The quad-curl problem arises in many areas such as inverse electromagnetic scattering \cite{cakoni2007variational,monk2012finite,sun2011iterative} and magnetohydrodynamics \cite{zheng2011nonconforming}. Finite element methods (FEMs) are natural choices for numerical treatment of such problems. In recent years, various FEMs have been proposed and analyzed.

Among various FEMs, conforming elements are  natural candidate, for their error estimates can be obtained by the standard framework. Conforming methods on triangles and rectangles in two dimensions were devised in \cite{zhang2019Hcurl2,hu2020Simple}. Then a family of curl-curl conforming elements on tetrahedra was constructed in \cite{zhang2020curlcurl}, which has at least 315 degrees of freedom (DOFs) on each tetrahedron. Later, by enriching the shape function space with macro-element bubble functions, the authors of \cite{hu2022afamily} constructed a family of conforming elements whose lowest order element has only 18 DOFs. However, the use of macro-element bubble functions adds some difficulty in coding compared with the finite element space using only pure polynomials.

To avoid the use of conforming finite elements, nonconforming FEMs were studied in \cite{zheng2011nonconforming,huang2022nonconforing}. These methods have low computational cost since they have
small number of degrees of freedom (DOFs), but both of them are low-order. The authors of \cite{hu2022afamily} proposed another $\pmb{H}(\mathrm{grad}~\mathrm{curl})$-nonconforming elements, but macro-element bubble functions are still required.

Another approach to avoid the use of conforming finite elements is to write the quad-curl problem as a system of second-order problems and use the mixed FEM \cite{Sun2016MixedFemQuad}. Different mixed schemes for the quad-curl problems were presented in \cite{Zhang2018MixedQuadCurl}. Instead of solving the quad-curl problem directly, finite element methods based on decoupling of mixed formulation were studied in \cite{Brenner2017Hodge,cao2022error}.

In addition to the above methods, discontinuous Galerkin (DG) methods are also used to deal with the operator $(\nabla\times)^4$. In \cite{Hong2012DGFourthOrderCurl}, a DG method based on $H(\mathrm{curl})$-conforming element was presented for the quad-curl problem. A hybridizable discontinuous Galerkin (HDG) method was investigated in \cite{Chen2019HDGQuadCurl}. Recently, a $C^0$ interior penalty method was studied in \cite{Sun2020C0interior}. However, these methods modify the variational formulation to tackle discontinuity of the basis functions, which increases the difficulty of programming to a certain extent.

The main purpose of this work is to construct a family of nonconforming elements which uses only pure polynomials as shape functions, allows high order extension, and is easier to code than conforming, DG and HDG methods. By comparison, we find that the nonconforming space $\pmb{U}_h$ satisfies $\pmb{U}_h\nsubseteq\pmb{H}(\mathrm{curl}),\  \nabla_h\times\pmb{U}_h\nsubseteq\pmb{H}^1$ in  \cite{zheng2011nonconforming,huang2022nonconforing}, and satisfies $\pmb{U}_h\nsubseteq\pmb{H}(\mathrm{curl}),\  \nabla_h\times\pmb{U}_h\subset\pmb{H}^1$ in  \cite{hu2022afamily}. A natural question is whether there is a space that satisfies $\pmb{U}_h\subset\pmb{H}(\mathrm{curl}),\  \nabla\times\pmb{U}_h\nsubseteq\pmb{H}^1$,
since for $\pmb{u}_h$ in such a space, we would have $\curl\pmb{u}_h\in\pmb{H}(\mathrm{div})$, $\curl\pmb{u}_h$ is divergence-free and has some tangential continuity. This reminds us nonconforming elements for the Stokes and Brinkman problems \cite{guzman2012afamily,xie2008uniformly}. The main idea to construct nonconforming elements in \cite{guzman2012afamily} is to modify $\pmb{H}(\mathrm{div})$-conforming finite elements to have some tangential continuity. More precisely, their local space on each simplex $K$ is of the form
\begin{align*}
\pmb{M}(K)+\curl(b_K\pmb{Q}(K)),
\end{align*}
where $\pmb{M}(K)$ is the local $\pmb{H}(\mathrm{div})$-conforming space, $b_K$ is the element bubble function that vanishes on $\partial K$ and the space $\pmb{Q}(K)$ is spanned by the face bubble functions multiplied by some vector-valued polynomials. Inspired by this, we construct our local space in the form
\begin{align*}
\pmb{N}(K)+b_K\pmb{Q}(K),
\end{align*}
where $\pmb{N}(K)$ is the local $\pmb{H}(\mathrm{curl})$-conforming space. Note that $b_K$ vanishes on $\partial K$ and hence the resulting space is still $\pmb{H}(\mathrm{curl})$-conforming. Thus, the only purpose of adding the function $b_K\pmb{Q}(K)$ is to enforce some tangential continuity of $\curl\pmb{u}_h$. As $\pmb{Q}(K)$  in \cite{guzman2012afamily} being arbitrary order, the nonconforming space constructed here can also be arbitrary order. Moreover, the proposed method only uses pure polynomials as shape functions and the curl of the finite element space possesses some tangential continuity, so the implement of the method is relatively simpler than conforming, DG and HDG methods.

The rest of this paper is organized as follows: In Section \ref{sectionpreliminary}, we introduce some notations. In Section \ref{sectionFEM}, we introduce a family of nonconforming elements for the quad-curl problem in 3D. Here,
we describe the space, its associated degrees of freedom and unisolvency. We also define the related interpolation and study its approximation properties. In Section \ref{sectionApplication}, we use our proposed elements to solve the quad-curl problem and give error estimates. In Section \ref{sectionnumerical}, we provide numerical examples to verify the correctness and efficiency of our method.

Throughout the paper, we use the notations $A\lesssim B$ and $A\gtrsim B$ to represent the inequalities $A\le CB$ and $A\ge CB$, respectively, where $C$ may depend only on the minimum angle of the triangles in the mesh $\mesh$.

\section{Preliminaries}\label{sectionpreliminary}
Let $\Omega\subset\mathbb{R}^3$ be a contractible Lipschitz domain and $\partial \Omega$ its boundary. For a positive integer, we utilize common notation for the Sobolev spaces $H^m(D)$ or $H^m_0(D)$ on a simply connected subdomain $D\subset\Omega$, equipped with the norm $\fanshu[m,D]{\cdot}$ and seminorm $\banfan[m,D]{\cdot}$. If $D=\Omega$, the subscript will be omitted. Conventionally, we write $L^2(D)$ instead of $H^0(D)$.

Let $\pmb{u}=(u_1,u_2,u_3)^T$ and $\pmb{w}=(w_1,w_2,w_3)^T$, where the superscript $T$ denotes the transpose, then we denote
\begin{align*}
\divv\pmb{u}&=\frac{\partial u_1}{\partial x_1}+\frac{\partial u_2}{\partial x_2}+\frac{\partial u_3}{\partial x_3},\\
\pmb{u}\times\pmb{w}&=(u_2w_3-u_3w_2,u_1w_3-u_3w_1,u_1w_2-u_2w_1)^T,\\
\curl\pmb{u}&=(\frac{\partial u_3}{\partial x_2}-\frac{\partial u_2}{\partial x_3},\frac{\partial u_1}{\partial x_3}-\frac{\partial u_3}{\partial x_1},\frac{\partial u_2}{\partial x_1}-\frac{\partial u_1}{\partial x_2})^T,\\
(\curl)^2\pmb{u}&= \curl\curl\pmb{u}.
\end{align*}

On a simply-connected sub-domain $D\subset\Omega$ we write
\begin{align}
L_0^2({D})&=\{q\in L^2({D}): \int_{{D}}qd\pmb{x}=0\},\\
\pmb{H}(\mathrm{div};{D})&=\{\pmb{v}\in \pmb{L}^2({D}): \divv \pmb{v}\in L^2({D})\},\\
\pmb{H}(\mathrm{div}^0;{D})&=\{\pmb{v}\in \pmb{H}(\mathrm{div};{D}):\divv\pmb{v}=0 \text{ in }{D}\},\\
\pmb{H}(\mathrm{curl};{D})&=\{\pmb{v}\in \pmb{L}^2({D}): \curl \pmb{v}\in \pmb{L}^2({D})\},\\
\pmb{H}_0(\mathrm{curl};{D})&=\{\pmb{v}\in \pmb{H}(\mathrm{curl};{D}):\pmb{v}\times\pmb{n}=0 \text{ on }\partial{D}\},\\
\pmb{H}^1(\mathrm{curl};{D})&=\{\pmb{v}\in \pmb{H}^1({D}): \curl\pmb{v}\in \pmb{H}^1({D})\},\\
\pmb{H}^1_0(\mathrm{curl};{D})&=\{\pmb{v}\in \pmb{H}^1(\mathrm{curl};{D})\cap\pmb{H}_0^1({D}):\ \curl\pmb{v}=0 \text{ on }\partial{D}\},\\
\pmb{H}(\mathrm{grad}\ \mathrm{curl};{D})&=\{\pmb{v}\in \pmb{L}^2({D}): \curl\pmb{v}\in\pmb{H}^1({D})\},\\
\pmb{H}_0(\mathrm{grad}\ \mathrm{curl};{D})&=\{\pmb{v}\in\pmb{H}(\mathrm{grad}\ \mathrm{curl};{D}):\pmb{v}\times\pmb{n}=0, \ \curl\pmb{v}=0 \text{ on }\partial{D}\},
\end{align}
where $\pmb{n}$ is the unit outward normal vector to the boundary $\partial D$.

Let $\mesh$ be a shape-regular simplicial triangulation with $h_K=\mathrm{diam}(K)$ for all $K\in\mesh$ and $h=\max_{K\in\mesh}h_K$. We denote by $\face$ the faces, $\face^i$ the interior faces and $\face^b$ the boundary faces in $\mesh$. Given $K\in\mesh$ and a face $F\subset\partial K$, we denote by $\lambda_F$ the barycentric coordinate of $K$ which vanishes on $F$. The element bubble and face bubbles are given by
\begin{align}
b_K=\prod_F\lambda_F,\quad b_F=\prod_{G\neq F}\lambda_G,
\end{align}
where the product runs over the faces of $K$.

For a given simplex $S$ in $\mathbb{R}^3$ and $m\ge0$, the vector-valued polynomials are defined as $\pmb{P}_m(S)=[P_m(S)]^3$, where $P_m(S)$ is the space of polynomials defined of $S$ of degree less than or equal to $m$. We also set $\pmb{P}_m(S)$ and $P_m(S)$ to be the empty set if $m<0$.

For interior and boundary inner products we will use the following notation:
\begin{align*}
(\pmb{u},\pmb{v})_K=\int_K\pmb{u}\cdot\pmb{v}dx,\quad \langle p,q\rangle_F=\int_F pqds.
\end{align*}
We denote by $\pmb{n}_F$ the unit outward normal vector to a face $F$ of $K$.

We will also need to define the tangential and normal jump operators. If $F\in \face^i$ with $F=K^+\cap K^-$, then we set
\begin{align*}
\jump{\pmb{v}\times\pmb{n}}|_F&=(\pmb{v}|_{K^+}\times\pmb{n}_{K^+})|_F+(\pmb{v}|_{K^-}\times\pmb{n}_{K^-})|_F,\\
\jump{\pmb{v}\cdot\pmb{t}}|_F&=(\pmb{v}|_{K^+}\cdot\pmb{t}_{K^+})|_F+(\pmb{v}|_{K^-}\cdot\pmb{t}_{K^-})|_F,\\
\jump{\pmb{v}\cdot\pmb{n}}|_F&=(\pmb{v}|_{K^+}\cdot\pmb{n}_{K^+})|_F+(\pmb{v}|_{K^-}\cdot\pmb{n}_{K^-})|_F,
\end{align*}
where $\pmb{n}_{K^{\pm}}$ is the unit outward normal to $\partial K^{\pm}$ and $\pmb{t}_{K^{\pm}}$ is the unit outward tangent to $\partial K^{\pm}$.

If $F\in \face^b$ is a boundary face with $F=K^+\cap K^-$, then we set
\begin{align*}
\jump{\pmb{v}\times\pmb{n}}|_F&=(\pmb{v}|_{K}\times\pmb{n}_{K})|_F,\\
\jump{\pmb{v}\cdot\pmb{t}}|_F&=(\pmb{v}|_{K}\cdot\pmb{t}_{K})|_F,\\
\jump{\pmb{v}\cdot\pmb{n}}|_F&=(\pmb{v}|_{K}\cdot\pmb{n}_{K})|_F.
\end{align*}

\section{A family of nonconforming curl-curl finite element in 3D}\label{sectionFEM}
\subsection{The local space}
Since our new elements are based on $\pmb{H}(\mathrm{curl};\Omega)$ finite element spaces plus some bubble functions, we first review some well-known elements. For $K\in\mesh$, let $\pmb{N}^k(K)$ denote the local N\'{e}d\'{e}lec space of the first kind,
\begin{align}
\pmb{N}^k(K)=\pmb{P}^{k}(K)+\{\pmb{v}\in\pmb{P}^{k+1}(K):\pmb{v}\cdot\pmb{x}=0\}\quad(k\ge1),\label{Nedelec1}
\end{align}
and let $\pmb{M}^k(K)$ denote the local BDM space
\begin{align}
\pmb{M}^{k}(K)=\pmb{P}^{k}(K)\quad(k\ge1).\label{BDM}
\end{align}

We then define the local space for the 3D quad-curl problem as
\begin{align}
\pmb{U}^k(K)=\pmb{N}^k(K)+b_K\pmb{Q}^{k-1}(K),
\end{align}
where
\begin{align}
\pmb{Q}^{k-1}(K)=\sum_{F}b_F\pmb{Q}_F^{k-1}(K),
\end{align}
and
\begin{align}
\pmb{Q}_F^{k-1}(K)=\{&\pmb{q}\times\pmb{n}_F\in\pmb{P}^{k-1}(K)\times\pmb{n}_F:\notag\\
&\quad(\pmb{q}\times\pmb{n}_F,b_Kb_F(\pmb{w}\times\pmb{n}_F))_K=0\quad\forall\pmb{w}\in\pmb{P}^{k-2}(K) \}.\label{definitionQF}
\end{align}
The space $b_K\pmb{Q}^{k-1}(K)$ was introduced by \cite{guzman2012afamily}.

We equip the local space $\pmb{U}^k(K)$  with the following degrees of freedom
\begin{align}
&\langle\pmb{u}\cdot\pmb{t}_e,\kappa\rangle_e\qquad\qquad\qquad\text{for all }\kappa\in P^{k}(e) \text{ and edge }e\text{ of }K,\label{nonconDOFs1}\\
&\langle\pmb{u}\times\pmb{n}_F,\pmb{\mu}\rangle_F\qquad\qquad\ \text{ for all }\pmb{\mu}\in \pmb{P}^{k-1}(F) \text{ and faces }F\text{ of }K,\label{nonconDOFs2}\\
&\langle(\curl\pmb{u})\times\pmb{n}_F,\pmb{\chi}\rangle_F\quad\ \text{ for all }\pmb{\chi}\in \pmb{P}^{k-1}(F) \text{ and faces }F\text{ of }K,\label{nonconDOFs3}\\
&(\pmb{u},\pmb{\rho})_K\qquad\qquad\qquad\quad\text{for all }\pmb{\rho}\in \pmb{P}^{k-2}(K).\label{nonconDOFs4}
\end{align}

\begin{theorem}
We have
\begin{align}
\pmb{U}^k(K)&=\pmb{N}^k(K)\oplus b_K\pmb{Q}^{k-1}(K),\label{UKNKbKQF}\\
\dim \pmb{U}^k(K)&= \dim\pmb{N}^k(K)+ 4\dim \pmb{P}^{k-1}(F).\label{dimUK}
\end{align}
Furthermore, any function $\pmb{v}\in \pmb{U}^k(K)$ is uniquely determined by the degrees of freedom \eqref{nonconDOFs1}-\eqref{nonconDOFs4}.
\end{theorem}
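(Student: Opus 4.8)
The plan is to reduce the whole statement to a dimension count together with a single injectivity (vanishing) statement. First I would record that \eqref{nonconDOFs1}, \eqref{nonconDOFs2}, \eqref{nonconDOFs4} are exactly the classical first-kind N\'ed\'elec degrees of freedom, so their number equals $\dim\pmb{N}^k(K)$, while \eqref{nonconDOFs3} contributes $4\dim\pmb{P}^{k-1}(F)$; hence the total number of functionals already matches the right-hand side of \eqref{dimUK}. Consequently, once \eqref{dimUK} is proved, unisolvency is equivalent to the implication: if $\pmb{v}\in\pmb{U}^k(K)$ is annihilated by \eqref{nonconDOFs1}--\eqref{nonconDOFs4}, then $\pmb{v}=0$.

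For the dimension, the first step is to compute $\dim\pmb{Q}_F^{k-1}(K)$. The map $\pmb{q}\mapsto\pmb{q}\times\pmb{n}_F$ on $\pmb{P}^{k-1}(K)$ has kernel $\pmb{n}_F P^{k-1}(K)$, so its image has dimension $2\dim P^{k-1}(K)$, and the orthogonality constraints in \eqref{definitionQF} are indexed by $\{\pmb{w}\times\pmb{n}_F:\pmb{w}\in\pmb{P}^{k-2}(K)\}$. I would show these constraints have full rank $2\dim P^{k-2}(K)$ by the positivity trick: testing a constraint function $\pmb\phi=\pmb{w}\times\pmb{n}_F$ against itself gives $\int_K b_Kb_F|\pmb\phi|^2\,dx=0$, which forces $\pmb\phi=0$ because $b_Kb_F>0$ in the interior of $K$. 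Hence $\dim\pmb{Q}_F^{k-1}(K)=2(\dim P^{k-1}(K)-\dim P^{k-2}(K))=2\dim P^{k-1}(F)=\dim\pmb{P}^{k-1}(F)$. Since multiplication by the nonzero polynomial $b_K$ is injective, and since the sum $\sum_F b_F\pmb{Q}_F^{k-1}(K)$ is direct (restricting $\sum_F b_F(\pmb{q}_F\times\pmb{n}_F)=0$ to a face $F_0$ kills every term but the $F_0$-term, because each $b_F$ with $F\neq F_0$ carries the factor $\lambda_{F_0}$, leaving $(\pmb{q}_{F_0}\times\pmb{n}_{F_0})|_{F_0}=0$, which the same positivity argument upgrades to $\pmb{q}_{F_0}\times\pmb{n}_{F_0}=0$), I obtain $\dim b_K\pmb{Q}^{k-1}(K)=4\dim\pmb{P}^{k-1}(F)$.

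The key observation, which yields both the direct sum \eqref{UKNKbKQF} and unisolvency, is that \eqref{definitionQF} forces $b_K\pmb{Q}^{k-1}(K)$ to be $L^2(K)$-orthogonal to $\pmb{P}^{k-2}(K)$: for each face term $b_Kb_F(\pmb{q}_F\times\pmb{n}_F)$ and any $\pmb\rho\in\pmb{P}^{k-2}(K)$, I split $\pmb\rho=\rho_n\pmb{n}_F+\pmb{w}\times\pmb{n}_F$ with $\pmb{w}=\pmb{n}_F\times\pmb\rho\in\pmb{P}^{k-2}(K)$; the normal part drops out since $\pmb{q}_F\times\pmb{n}_F$ is tangential, and the tangential part is annihilated by \eqref{definitionQF}. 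Thus the interior functionals \eqref{nonconDOFs4} do not see the bubble part $b_K\pmb{p}$, and neither do \eqref{nonconDOFs1}, \eqref{nonconDOFs2} since $b_K$ vanishes on $\partial K$. Writing $\pmb{v}=\pmb{v}_N+b_K\pmb{p}$, the functionals \eqref{nonconDOFs1}, \eqref{nonconDOFs2}, \eqref{nonconDOFs4} therefore reduce to the N\'ed\'elec degrees of freedom evaluated on $\pmb{v}_N$ alone, and their vanishing gives $\pmb{v}_N=0$ by the known unisolvency of $\pmb{N}^k(K)$. Applying this to an element $\pmb{v}_N=b_K\pmb{p}$ of the intersection $\pmb{N}^k(K)\cap b_K\pmb{Q}^{k-1}(K)$ also proves the sum \eqref{UKNKbKQF} is direct, which combined with the previous paragraph gives \eqref{dimUK}.

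Finally, once $\pmb{v}_N=0$, I extract $\pmb{p}=0$ from \eqref{nonconDOFs3} by computing the tangential trace of $\curl(b_K\pmb{p})$ on a face $F$. Since $b_K=\lambda_F b_F$ and $\curl(b_K\pmb{p})=\nabla b_K\times\pmb{p}+b_K\curl\pmb{p}$, on $F$ only $\nabla b_K|_F=b_F\,\nabla\lambda_F$ survives, with $\nabla\lambda_F$ parallel to $\pmb{n}_F$; hence $(\curl(b_K\pmb{p}))\times\pmb{n}_F|_F$ is a nonzero constant multiple of $b_F^2(\pmb{q}_F\times\pmb{n}_F)|_F$. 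Testing \eqref{nonconDOFs3} with $\pmb\chi=(\pmb{q}_F\times\pmb{n}_F)|_F$ gives $\int_F b_F^2|\pmb{q}_F\times\pmb{n}_F|^2\,ds=0$, so $(\pmb{q}_F\times\pmb{n}_F)|_F=0$, and the orthogonality-plus-positivity argument upgrades this to $\pmb{q}_F\times\pmb{n}_F=0$, whence $\pmb{p}=0$. I expect the main obstacle to be exactly this interplay between \eqref{definitionQF} and the boundary behaviour of $\curl(b_K\pmb{p})$: the orthogonality condition is precisely what decouples the interior moments from the bubbles (isolating the N\'ed\'elec part), while the boundary computation is what lets the single family \eqref{nonconDOFs3} pin the bubbles down, and getting the degree $k-2$ in \eqref{definitionQF} right is what makes the dimension count and this decoupling simultaneously consistent.
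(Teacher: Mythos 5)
Your proof is correct and follows essentially the same route as the paper: the same orthogonality of $b_K\pmb{Q}^{k-1}(K)$ to $\pmb{P}^{k-2}(K)$ via the normal/tangential splitting of $\pmb{\rho}$, the same reduction of \eqref{nonconDOFs1}, \eqref{nonconDOFs2}, \eqref{nonconDOFs4} to N\'ed\'elec unisolvency (which also yields the directness in \eqref{UKNKbKQF}), and the same use of the face-trace identity $(\curl(b_Kb_F(\pmb{q}_F\times\pmb{n}_F)))\times\pmb{n}_F|_F=-a_Fb_F^2(\pmb{q}_F\times\pmb{n}_F)|_F$ together with \eqref{nonconDOFs3} to eliminate the bubble part. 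The only difference is that where the paper cites \cite[Lemmas 3.1 and 3.2]{guzman2012afamily} for this trace identity, for $\dim b_K\pmb{Q}^{k-1}(K)=4\dim\pmb{P}^{k-1}(F)$, and for the upgrade from $(\pmb{q}_F\times\pmb{n}_F)|_F=0$ to $\pmb{q}_F\times\pmb{n}_F=0$ on $K$, you prove these facts in-line (correctly, via the rank/positivity argument and divisibility by $\lambda_F$), making the argument self-contained.
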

\begin{proof}
Let $\pmb{z}\in b_K\pmb{Q}^{k-1}(K)$ with $\pmb{z}=b_K\sum_Fb_F(\pmb{q}_F\times\pmb{n}_F)$ and $\pmb{q}_F\times\pmb{n}_F\in\pmb{Q}_F^{k-1}$. For any $\pmb{\rho}\in\pmb{P}^{k-2}(K)$, we have $\pmb{\rho}=\pmb{\rho}\cdot\pmb{n}_F-(\pmb{\rho}\times\pmb{n}_F)\times\pmb{n}_F.$ By the definition of $\pmb{Q}_F^{k-1}$ \eqref{definitionQF}, we derive
\begin{align}\label{zrho}
(\pmb{z},\pmb{\rho})_K&=\sum_F(b_Kb_Fq_F\times\pmb{n}_F,\pmb{\rho})_K\notag\\
&=\sum_F(q_F\times\pmb{n}_F,b_Kb_F,(-\pmb{\rho}\times\pmb{n}_F)\times\pmb{n}_F)_K=0,\ \forall \pmb{\rho}\in\pmb{P}^{k-2}(K).
\end{align}
With this in hand, if $\pmb{v}\in\pmb{N}^k(K)\cap b_K\pmb{Q}^{k-1}(K)$, then we obtain
\begin{align*}
&\langle\pmb{v}\cdot\pmb{t}_e,\kappa\rangle_e=0\qquad\qquad\qquad\text{for all }\kappa\in P^{k}(e) \text{ and edge }e\text{ of }K,\\
&\langle\pmb{v}\times\pmb{n}_F,\pmb{\mu}\rangle_F=0\qquad\qquad\ \text{ for all }\pmb{\mu}\in \pmb{P}^{k-1}(F) \text{ and faces }F\text{ of }K,\\
&(\pmb{v},\pmb{\rho})_K=0\qquad\qquad\qquad\quad\text{for all }\pmb{\rho}\in \pmb{P}^{k-2}(K),
\end{align*}
which means that all the degrees of freedom of $\pmb{v}\in\pmb{N}^k(K)$ vanish. Thus we get $\pmb{v}=0$. Therefore we have proved \eqref{UKNKbKQF}. \eqref{dimUK} is a direct result of \eqref{UKNKbKQF} and
\cite[Lemma 3.2]{guzman2012afamily}.

It is easy to check that $\dim\pmb{U}^k(K)$ is equal to the number of degrees of freedom given by \eqref{nonconDOFs1}-\eqref{nonconDOFs4}. As a result, it suffices to show that \eqref{nonconDOFs1}-\eqref{nonconDOFs4} vanish for $\pmb{v}\in\pmb{U}^k(K)$, then $\pmb{v}=0$.

Let $\pmb{v}=\pmb{v}_0+\pmb{z}$ where $\pmb{v}_0\in\pmb{N}^k(K)$ and $\pmb{z}\in b_K\pmb{Q}^{k-1}(K).$ Recalling \eqref{zrho}, we have
\begin{align*}
&\langle\pmb{v}_0\cdot\pmb{t}_e,\kappa\rangle_e=0\qquad\qquad\qquad\text{for all }\kappa\in P^{k}(e) \text{ and edge }e\text{ of }K,\\
&\langle\pmb{v}_0\times\pmb{n}_F,\pmb{\mu}\rangle_F=0\qquad\qquad\ \text{ for all }\pmb{\mu}\in \pmb{P}^{k-1}(F) \text{ and faces }F\text{ of }K,\\
&(\pmb{v}_0,\pmb{\rho})_K=0\qquad\qquad\qquad\quad\text{for all }\pmb{\rho}\in \pmb{P}^{k-2}(K),
\end{align*}
which implies $\pmb{v}_0=0$. Consequently, we derive
\begin{align}
\pmb{v}=\pmb{z}=b_K\sum_Fb_F(\pmb{q}_F\times\pmb{n}_F),
\end{align}
for some $\pmb{q}_F\times\pmb{n}_F\in\pmb{Q}_F^{k-1}$.

Since the degrees of freedom \eqref{nonconDOFs3}, we get from \cite[Lemma 3.1]{guzman2012afamily} that
\begin{align}
0=\langle(\curl\pmb{z})\times\pmb{n}_F,\pmb{q}_F\times\pmb{n}_F\rangle_F=-\langle b_F^2(\pmb{q}_F\times\pmb{n}_F),\pmb{q}_F\times\pmb{n}_F\rangle_F,
\end{align}
which means that $\pmb{q}_F\times\pmb{n}_F$ vanishes on $F$. Then by \cite[Lemma 3.2]{guzman2012afamily} we have that $\pmb{q}_F\times\pmb{n}_F=0$ on $K$. We have thus proved the theorem.
\end{proof}
In addition, we introduce the local space\cite{guzman2012afamily} for the 3D Brinkman problem as
\begin{align}
\pmb{V}^k(K)=\pmb{M}^k(K)+\curl(b_K\pmb{Q}^{k-1}(K)).
\end{align}
The following degrees of freedom define a function $\pmb{v}\in\pmb{V}^k(K)$:
\begin{align}
&\langle\pmb{v}\cdot\pmb{n}_F,\mu\rangle_F\qquad\qquad\ \text{ for all }\mu\in P^{k}(F) \text{ and faces }F\text{ of }K,\label{GNDOFs1}\\
&\langle\pmb{v}\times\pmb{n}_F,\pmb{\chi}\rangle_F\qquad\qquad\ \text{ for all }\pmb{\chi}\in \pmb{P}^{k-1}(F) \text{ and faces }F\text{ of }K,\label{GNDOFs2}\\
&(\pmb{v},\pmb{\rho})_K\qquad\qquad\qquad\quad\text{for all }\pmb{\rho}\in \pmb{N}^{k-2}(K).\label{GNDOFs3}
\end{align}
\subsection{The global space and interpolation}
We have defined the local finite element spaces, then we can define the associated global spaces as
\begin{align}
\pmb{U}_h = \{\pmb{u}\in& \pmb{H}(\mathrm{curl};\Omega):\ \pmb{u}|_K\in\pmb{U}^k(K),\ \forall K\in\mesh,\notag\\
&\langle\jump{(\curl\pmb{u})\times\pmb{n}},\pmb{\chi}\rangle_F=0\ \forall\pmb{\chi}\in\pmb{P}^{k-1}(F),\ \forall F\in\face^i\},\label{definitionUh}\\
\pmb{U}_{h,0} = \{\pmb{u}\in& \pmb{U}_h\cap\pmb{H}_0(\mathrm{curl};\Omega): \langle\jump{(\curl\pmb{u})\times\pmb{n}},\pmb{\chi}\rangle_F=0\ \forall\pmb{\chi}\in\pmb{P}^{k-1}(F),\ \forall F\in\face^b\},\label{definitionUh0}\\
\pmb{B}_h = \{\pmb{u}\in& \pmb{H}(\mathrm{curl};\Omega):\ \pmb{u}|_K\in b_K\pmb{Q}^{k-1}(K),\ \forall K\in\mesh\},\label{definitionBh}\\
\pmb{V}_h = \{\pmb{v}\in& \pmb{H}(\mathrm{div};\Omega):\ \pmb{v}|_K\in\pmb{V}^k(K),\ \forall K\in\mesh,\notag\\
&\langle\jump{\pmb{v}\times\pmb{n}},\pmb{\chi}\rangle_F=0\ \forall\pmb{\chi}\in\pmb{P}^{k-1}(F),\ \forall F\in\face^i\},\label{definitionVh}\\
\pmb{V}_{h,0} = \{\pmb{v}\in& \pmb{V}_h\cap\pmb{H}_0(\mathrm{div};\Omega): \langle\jump{\pmb{v}\times\pmb{n}},\pmb{\chi}\rangle_F=0\ \forall\pmb{\chi}\in\pmb{P}^{k-1}(F),\ \forall F\in\face^b\},\label{definitionVh0}\\
\pmb{N}_h = \{\pmb{u}\in& \pmb{H}(\mathrm{curl};\Omega):\ \pmb{u}|_K\in\pmb{N}^k(K),\ \forall K\in\mesh\},\\
\pmb{M}_h = \{\pmb{v}\in& \pmb{H}(\mathrm{div};\Omega):\ \pmb{v}|_K\in\pmb{M}^k(K),\ \forall K\in\mesh\},\\
W_{h,0} = \{w\in& H_0^1(\Omega):\ w|_K\in P^{k+1}(K),\ \forall K\in\mesh\}.
\end{align}

Using the degrees of freedom \eqref{nonconDOFs1}-\eqref{nonconDOFs4}, we can define the interpolation $\pmb{\Pi}_{U}:\pmb{H}^1(\mathrm{curl};\Omega)\rightarrow\pmb{U}_h$ given locally as follows:
\begin{align}
&\langle(\pmb{\Pi}_{U}\pmb{u}-\pmb{u})\cdot\pmb{t}_e,\kappa\rangle_e=0\qquad\qquad\qquad\text{for all }\kappa\in P^{k}(e) \text{ and edge }e\text{ of }K,\label{nonconInterp1}\\
&\langle(\pmb{\Pi}_{U}\pmb{u}-\pmb{u})\times\pmb{n}_F,\pmb{\mu}\rangle_F=0\qquad\qquad\ \text{ for all }\pmb{\mu}\in \pmb{P}^{k-1}(F) \text{ and faces }F\text{ of }K,\label{nonconInterp2}\\
&\langle(\curl(\pmb{\Pi}_{U}\pmb{u}-\pmb{u}))\times\pmb{n}_F,\pmb{\chi}\rangle_F=0\quad\ \text{ for all }\pmb{\chi}\in \pmb{P}^{k-1}(F) \text{ and faces }F\text{ of }K,\label{nonconInterp3}\\
&(\pmb{\Pi}_{U}\pmb{u}-\pmb{u},\pmb{\rho})_K=0\qquad\qquad\qquad\quad\text{for all }\pmb{\rho}\in \pmb{P}^{k-2}(K).\label{nonconInterp4}
\end{align}
Similarly, we can define the following interpolations
\begin{align*}
&\pmb{\Pi}_{B}:\pmb{H}^1(\mathrm{curl};\mesh)\rightarrow\pmb{B}_h,\quad\pmb{\Pi}_{V}:\pmb{H}^1(\Omega)\rightarrow\pmb{V}_h,\\
&\pmb{\Pi}_{N}:\pmb{H}^1(\Omega)\rightarrow\pmb{N}_h,\quad\pmb{\Pi}_{M}:\pmb{H}^1(\Omega)\rightarrow\pmb{M}_h,
\end{align*}
where $\pmb{H}^1(\mathrm{curl};\mesh)=\{\pmb{u}\in \pmb{L}^2(\Omega):\ \pmb{u}|_K\in\pmb{H}^1(\mathrm{curl};K),\ \forall K\in\mesh\}$.

By a similar argument as in \cite[Lemma 5.40]{monk2003finite}, we can establish the following relation between $\pmb{\Pi}_{U}$ and $\pmb{\Pi}_{V}$.
\begin{lemma}\label{LemmaPiUPiV}
Suppose $\pmb{U}_h$ and $\pmb{V}_h$ are given by \eqref{definitionUh} and \eqref{definitionVh}, respectively. Then $\curl\pmb{U}_h\subset\pmb{V}_h$. If $\pmb{u}\in\pmb{U}$, then
\begin{align}
\curl(\pmb{\Pi}_{U}\pmb{u})=\pmb{\Pi}_{V}\curl\pmb{u}.
\end{align}
\end{lemma}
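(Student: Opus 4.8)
The plan is to run the classical commuting-diagram argument in the spirit of \cite[Lemma 5.40]{monk2003finite}. The identity is local and linear, so I would fix $K\in\mesh$, set $\pmb{e}:=\pmb{\Pi}_{U}\pmb{u}-\pmb{u}$, and use the fact that $\pmb{\Pi}_{V}$ reproduces the degrees of freedom of its argument: once $\curl(\pmb{\Pi}_{U}\pmb{u})$ and $\pmb{\Pi}_{V}\curl\pmb{u}$ are both known to lie in $\pmb{V}^{k}(K)$, the unisolvency of \eqref{GNDOFs1}--\eqref{GNDOFs3} reduces everything to proving that the three families of functionals \eqref{GNDOFs1}--\eqref{GNDOFs3} all vanish on $\curl\pmb{e}$. (Here $\pmb{u}$ is taken in the domain $\pmb{H}^{1}(\mathrm{curl};\Omega)$ of $\pmb{\Pi}_{U}$, so that $\curl\pmb{u}\in\pmb{H}^{1}(\Omega)$ and $\pmb{\Pi}_{V}\curl\pmb{u}$ is well defined.)

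First I would establish $\curl\pmb{U}_h\subset\pmb{V}_h$. Locally, $\pmb{N}^{k}(K)\subset\pmb{P}^{k+1}(K)$ gives at once $\curl\pmb{N}^{k}(K)\subset\pmb{P}^{k}(K)=\pmb{M}^{k}(K)$, while $\curl(b_K\pmb{Q}^{k-1}(K))\subset\pmb{V}^{k}(K)$ holds by the very definition of $\pmb{V}^{k}(K)$; hence $\curl\pmb{U}^{k}(K)\subset\pmb{V}^{k}(K)$. Globally, for $\pmb{u}\in\pmb{U}_h\subset\pmb{H}(\mathrm{curl};\Omega)$ one has $\curl\pmb{u}\in\pmb{H}(\mathrm{div};\Omega)$ with $\divv\curl\pmb{u}=0$, and the interior-face constraint $\langle\jump{(\curl\pmb{u})\times\pmb{n}},\pmb{\chi}\rangle_F=0$ defining $\pmb{U}_h$ in \eqref{definitionUh} is \emph{verbatim} the constraint $\langle\jump{\pmb{v}\times\pmb{n}},\pmb{\chi}\rangle_F=0$ defining $\pmb{V}_h$ in \eqref{definitionVh} with $\pmb{v}=\curl\pmb{u}$. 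Thus $\curl\pmb{u}\in\pmb{V}_h$, and in particular $\curl(\pmb{\Pi}_{U}\pmb{u})\in\pmb{V}_h$, which legitimizes the degrees-of-freedom comparison above.

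Two of the three functional families are straightforward. The tangential face functional \eqref{GNDOFs2} evaluated on $\curl\pmb{e}$ is precisely the interpolation condition \eqref{nonconInterp3} and therefore vanishes. For the interior functional \eqref{GNDOFs3} with $\pmb{\rho}\in\pmb{N}^{k-2}(K)$, I would integrate by parts,
\begin{align*}
(\curl\pmb{e},\pmb{\rho})_K=(\pmb{e},\curl\pmb{\rho})_K-\langle\pmb{e}\times\pmb{n},\pmb{\rho}\rangle_{\partial K}.
\end{align*}
Because $\pmb{N}^{k-2}(K)\subset\pmb{P}^{k-1}(K)$ yields $\curl\pmb{\rho}\in\pmb{P}^{k-2}(K)$, the volume term vanishes by \eqref{nonconInterp4}; and on each face the tangential trace of $\pmb{\rho}$ belongs to $\pmb{P}^{k-1}(F)$, so the boundary term vanishes by \eqref{nonconInterp2}.

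The crux is the normal functional \eqref{GNDOFs1}, namely $\langle(\curl\pmb{e})\cdot\pmb{n}_F,\mu\rangle_F$ with $\mu\in P^{k}(F)$. Here I would invoke the surface identity $(\curl\pmb{e})\cdot\pmb{n}_F=\mathrm{curl}_F(\pi_t\pmb{e})$, where $\pi_t\pmb{e}$ is the tangential component of $\pmb{e}$ on $F$, together with Green's formula on the two-dimensional face $F$; this produces an interior term $\langle\pmb{e}\times\pmb{n}_F,\nabla_F\mu\rangle_F$ and a boundary term $\sum_{e\subset\partial F}\langle\pmb{e}\cdot\pmb{t}_e,\mu\rangle_e$. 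Since $\nabla_F\mu\in\pmb{P}^{k-1}(F)$ the interior term vanishes by \eqref{nonconInterp2}, and since $\mu|_e\in P^{k}(e)$ the edge term vanishes by \eqref{nonconInterp1}. With all three families shown to vanish on $\curl\pmb{e}$, unisolvency of \eqref{GNDOFs1}--\eqref{GNDOFs3} forces $\curl(\pmb{\Pi}_{U}\pmb{u})=\pmb{\Pi}_{V}\curl\pmb{u}$ on each $K$, which is the assertion. I expect the only delicate point to be the bookkeeping in this last step — the surface Green's formula with its orientation conventions for $\pmb{t}_e$ and $\pmb{n}_F$, and the identification of tangential traces — and I note that it is exactly the degree matching here (the edge functionals using $P^{k}(e)$ rather than $P^{k-1}(e)$) that makes the boundary term land in the range of \eqref{nonconInterp1}.
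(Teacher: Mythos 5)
Your proposal is correct and follows essentially the same route as the paper, which does not write out a proof but simply invokes the commuting-diagram argument of \cite[Lemma 5.40]{monk2003finite}; your write-up is a faithful instantiation of exactly that argument (local unisolvency of \eqref{GNDOFs1}--\eqref{GNDOFs3}, matching of \eqref{GNDOFs2} with \eqref{nonconInterp3}, 3D integration by parts against $\pmb{N}^{k-2}(K)$, and the surface Green's formula splitting \eqref{GNDOFs1} into the face term handled by \eqref{nonconInterp2} and the edge term handled by \eqref{nonconInterp1}). The degree bookkeeping you flag at the end is indeed the one delicate point, and your accounting of it is right.
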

\begin{lemma}\label{Lemmajumpcurl}
Under the assumption of Lemma \ref{LemmaPiUPiV}, for any $\pmb{v}_h\in\pmb{U}_h,$ we have
\begin{align}
\langle\jump{\curl\pmb{v}_h},\pmb{\chi}\rangle_F=0,\ \forall\pmb{\chi}\in\pmb{P}^{k-1}(F),\ F\in\face^i.
\end{align}
\end{lemma}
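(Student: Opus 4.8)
The plan is to reduce the full vector jump $\jump{\curl\pmb{v}_h}$ on an interior face $F=K^+\cap K^-$ to the two quantities we already control, namely the normal jump $\jump{(\curl\pmb{v}_h)\cdot\pmb{n}}$ and the tangential jump $\jump{(\curl\pmb{v}_h)\times\pmb{n}}$. Fixing $\pmb{n}=\pmb{n}_{K^+}$ and applying the pointwise orthogonal decomposition $\pmb{w}=(\pmb{w}\cdot\pmb{n})\pmb{n}+\pmb{n}\times(\pmb{w}\times\pmb{n})$ to $\pmb{w}=\jump{\curl\pmb{v}_h}$, I would split, for an arbitrary $\pmb{\chi}\in\pmb{P}^{k-1}(F)$,
\begin{align*}
\langle\jump{\curl\pmb{v}_h},\pmb{\chi}\rangle_F=\langle\jump{(\curl\pmb{v}_h)\cdot\pmb{n}}\,\pmb{n},\pmb{\chi}\rangle_F+\langle\pmb{n}\times\jump{(\curl\pmb{v}_h)\times\pmb{n}},\pmb{\chi}\rangle_F,
\end{align*}
and then show that each of the two terms vanishes separately.

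For the first (normal) term I would invoke Lemma \ref{LemmaPiUPiV}: since $\pmb{v}_h\in\pmb{U}_h$, we have $\curl\pmb{v}_h\in\curl\pmb{U}_h\subset\pmb{V}_h\subset\pmb{H}(\mathrm{div};\Omega)$, so the normal component of $\curl\pmb{v}_h$ is single-valued across $F$, that is, $\jump{(\curl\pmb{v}_h)\cdot\pmb{n}}|_F=0$. Hence the first term is identically zero. (Equivalently, one may observe directly that $\divv\curl\pmb{v}_h=0$ together with global $\pmb{L}^2$-membership forces $\curl\pmb{v}_h\in\pmb{H}(\mathrm{div};\Omega)$.)

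For the second (tangential) term, the key observation is that $\pmb{n}=\pmb{n}_{K^+}$ is constant on the flat face $F$, so crossing with $\pmb{\chi}$ preserves the polynomial degree: $\pmb{\chi}\times\pmb{n}\in\pmb{P}^{k-1}(F)$. Using the scalar triple product identity $(\pmb{n}\times\pmb{a})\cdot\pmb{\chi}=\pmb{a}\cdot(\pmb{\chi}\times\pmb{n})$ with $\pmb{a}=\jump{(\curl\pmb{v}_h)\times\pmb{n}}$, I would rewrite
\begin{align*}
\langle\pmb{n}\times\jump{(\curl\pmb{v}_h)\times\pmb{n}},\pmb{\chi}\rangle_F=\langle\jump{(\curl\pmb{v}_h)\times\pmb{n}},\pmb{\chi}\times\pmb{n}\rangle_F,
\end{align*}
which vanishes by the defining tangential constraint of $\pmb{U}_h$ in \eqref{definitionUh}, applied to the admissible test function $\pmb{\chi}\times\pmb{n}\in\pmb{P}^{k-1}(F)$. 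Combining the two steps yields the claim for every interior face $F\in\face^i$.

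There is no deep obstacle here; the argument is essentially a careful bookkeeping of the two jump components. The only points needing attention are: confirming that the subscript-free jump $\jump{\curl\pmb{v}_h}$ is the full vector jump $\pmb{w}|_{K^+}-\pmb{w}|_{K^-}$ and that the displayed decomposition is sign-consistent with the conventions fixed for $\jump{\cdot\times\pmb{n}}$ and the normal jump; and the elementary but essential remark that rotating a test polynomial by the constant face normal keeps it inside $\pmb{P}^{k-1}(F)$, which is precisely what allows the tangential constraint of $\pmb{U}_h$ to be invoked. The $\pmb{H}(\mathrm{div};\Omega)$-membership of $\curl\pmb{v}_h$ furnished by Lemma \ref{LemmaPiUPiV} is what disposes of the normal component, so the global $\pmb{H}(\mathrm{curl};\Omega)$-conformity built into $\pmb{U}_h$ is used crucially.
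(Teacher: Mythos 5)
Your proposal is correct and follows essentially the same route as the paper's own proof: the pointwise decomposition of $\jump{\curl\pmb{v}_h}$ into normal and tangential parts, the normal continuity furnished by $\curl\pmb{U}_h\subset\pmb{V}_h\subset\pmb{H}(\mathrm{div};\Omega)$ from Lemma \ref{LemmaPiUPiV}, and the reduction of the tangential term to the constraint $\langle\jump{(\curl\pmb{v}_h)\times\pmb{n}},\pmb{\chi}\times\pmb{n}_F\rangle_F=0$ built into the definition \eqref{definitionUh}. Your additional bookkeeping (the triple-product identity, the observation that $\pmb{\chi}\times\pmb{n}_F\in\pmb{P}^{k-1}(F)$ because $\pmb{n}_F$ is constant, and the sign-consistency of the jump conventions) merely spells out steps the paper leaves implicit.
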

\begin{proof}
On each $F\in\face^i$, we can write $\curl\pmb{v}_h$ as
\begin{align}
\curl\pmb{v}_h|_F=\curl\pmb{v}_h\cdot\pmb{n}_F|_F+\pmb{n}_F\times(\curl\pmb{v}_h\times\pmb{n}_F)|_F.
\end{align}
Since $\curl\pmb{U}_h\subset\pmb{V}_h$, we get $\jump{\curl\pmb{v}_h\cdot\pmb{n}_F}|_F=0$. Therefore we obtain
\begin{align}
\langle\jump{\curl\pmb{v}_h},\pmb{\chi}\rangle_F=\langle\jump{(\curl\pmb{v}_h\times\pmb{n}_F)},\pmb{\chi}\times\pmb{n}_F\rangle_F=0, \ \forall\pmb{\chi}\in\pmb{P}^{k-1}(F),\ F\in\face^i.
\end{align}
\end{proof}
We now investigate the approximation properties of $\pmb{\Pi}_{U}$. The next result provides error estimates for the interpolation.
\begin{theorem}\label{theoreminterpo}
Let integers $l$ and $s$ satisfy $1\le l\le s\le k+1$. Then for any $\pmb{u}\in \pmb{H}^{l}(K)$ and $\curl\pmb{u}\in \pmb{H}^{s}(K)$, there holds
\begin{align}
\fanshu[0,K]{\pmb{u}-\pmb{\Pi}_{U}\pmb{u}}&\lesssim h_K^l(|\pmb{u}|_{l,K}+|\curl\pmb{u}|_{s,K}),\label{errorL2PiU}\\
\fanshu[m,K]{\curl\pmb{u}-\curl(\pmb{\Pi}_{U}\pmb{u})}&\lesssim h_K^{s-m}|\curl\pmb{u}|_{s,K}, \;\; m=0,1. \label{errorH1PiU}
\end{align}
\end{theorem}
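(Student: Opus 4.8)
The plan is to establish the curl estimate \eqref{errorH1PiU} first, since it both feeds into the $\pmb{L}^2$ estimate and follows almost immediately from the commuting relation of Lemma \ref{LemmaPiUPiV}. By that lemma, $\curl(\pmb{\Pi}_{U}\pmb{u})=\pmb{\Pi}_{V}(\curl\pmb{u})$, so that $\curl\pmb{u}-\curl(\pmb{\Pi}_{U}\pmb{u})=(\mathrm{Id}-\pmb{\Pi}_{V})(\curl\pmb{u})$. The entire curl error therefore reduces to the interpolation error of the Brinkman interpolation $\pmb{\Pi}_{V}$ applied to $\curl\pmb{u}$. Since $\pmb{M}^k(K)=\pmb{P}^k(K)\subset\pmb{V}^k(K)$, the projection $\pmb{\Pi}_{V}$ reproduces $\pmb{P}^k(K)$; combined with its $\pmb{H}^1(K)$-stability (each degree of freedom \eqref{GNDOFs1}--\eqref{GNDOFs3} is bounded by an $\pmb{H}^1(K)$ trace/volume norm after the usual scaling), the Bramble--Hilbert lemma and a reference-element scaling argument give $\fanshu[m,K]{(\mathrm{Id}-\pmb{\Pi}_{V})\pmb{\phi}}\lesssim h_K^{s-m}\banfan[s,K]{\pmb{\phi}}$ for $m=0,1$ and any $\pmb{\phi}\in\pmb{H}^s(K)$ with $s\le k+1$. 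Taking $\pmb{\phi}=\curl\pmb{u}$ yields \eqref{errorH1PiU}.

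For \eqref{errorL2PiU} I would route through the N\'ed\'elec interpolation and split $\pmb{u}-\pmb{\Pi}_{U}\pmb{u}=(\pmb{u}-\pmb{\Pi}_{N}\pmb{u})+(\pmb{\Pi}_{N}\pmb{u}-\pmb{\Pi}_{U}\pmb{u})$. The first piece is controlled by the classical first-kind N\'ed\'elec estimate $\fanshu[0,K]{\pmb{u}-\pmb{\Pi}_{N}\pmb{u}}\lesssim h_K^l\banfan[l,K]{\pmb{u}}$. The second piece $\pmb{w}:=\pmb{\Pi}_{N}\pmb{u}-\pmb{\Pi}_{U}\pmb{u}$ lies in $\pmb{U}^k(K)$, and the key observation is to read off its degrees of freedom. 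The edge, tangential-face and interior moments \eqref{nonconDOFs1}, \eqref{nonconDOFs2}, \eqref{nonconDOFs4} are exactly the first-kind N\'ed\'elec degrees of freedom (unisolvent on $\pmb{N}^k(K)$, as used in the unisolvency proof above), so both $\pmb{\Pi}_{N}\pmb{u}$ and $\pmb{\Pi}_{U}\pmb{u}$ match those moments of $\pmb{u}$ and they vanish on $\pmb{w}$. Only the curl-face moment \eqref{nonconDOFs3} survives, and since $\pmb{\Pi}_{U}\pmb{u}$ reproduces it by \eqref{nonconInterp3}, we get $\langle(\curl\pmb{w})\times\pmb{n}_F,\pmb{\chi}\rangle_F=\langle(\curl(\pmb{\Pi}_{N}\pmb{u}-\pmb{u}))\times\pmb{n}_F,\pmb{\chi}\rangle_F$ for all $\pmb{\chi}\in\pmb{P}^{k-1}(F)$.

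To convert this into an $\pmb{L}^2$ bound I would use that, on the reference element, the $\pmb{L}^2$-norm of a function in $\pmb{U}^k(K)$ whose moments \eqref{nonconDOFs1}, \eqref{nonconDOFs2}, \eqref{nonconDOFs4} vanish is equivalent to the size of its remaining moment \eqref{nonconDOFs3} --- precisely the injectivity isolated in the unisolvency proof. Transporting this equivalence to $K$ by the covariant Piola transform, with the shape-regularity bounds $\|B\|\lesssim h_K$, $\|B^{-1}\|\lesssim h_K^{-1}$, $|\det B|\simeq h_K^3$, gives $\fanshu[0,K]{\pmb{w}}\lesssim h_K^{a}\sum_F\sup_{\pmb{\chi}}\fanshu[0,F]{\pmb{\chi}}^{-1}|\langle(\curl\pmb{w})\times\pmb{n}_F,\pmb{\chi}\rangle_F|$ for a fixed power $a$. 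I would then bound the surviving moment by Cauchy--Schwarz and a scaled trace inequality, $\fanshu[0,F]{\curl(\pmb{u}-\pmb{\Pi}_{N}\pmb{u})}\lesssim h_K^{-1/2}\fanshu[0,K]{\curl(\pmb{u}-\pmb{\Pi}_{N}\pmb{u})}+h_K^{1/2}\banfan[1,K]{\curl(\pmb{u}-\pmb{\Pi}_{N}\pmb{u})}$, and invoke the standard N\'ed\'elec curl-approximation $\fanshu[m,K]{\curl(\pmb{u}-\pmb{\Pi}_{N}\pmb{u})}\lesssim h_K^{s-m}\banfan[s,K]{\curl\pmb{u}}$, which yields $\fanshu[0,F]{\curl(\pmb{u}-\pmb{\Pi}_{N}\pmb{u})}\lesssim h_K^{s-1/2}\banfan[s,K]{\curl\pmb{u}}$. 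Collecting powers shows $\fanshu[0,K]{\pmb{w}}\lesssim h_K^{p}\banfan[s,K]{\curl\pmb{u}}$ with $p\ge l$ (the worst case has $p\ge s\ge l$), hence $\fanshu[0,K]{\pmb{w}}\lesssim h_K^{l}\banfan[s,K]{\curl\pmb{u}}$, and adding the two pieces gives \eqref{errorL2PiU}.

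The routine ingredients are the classical N\'ed\'elec and Brinkman interpolation estimates together with the Bramble--Hilbert/scaling machinery; the part demanding genuine care is the second piece of \eqref{errorL2PiU}. The main obstacle is the bookkeeping of the covariant Piola scaling for the reference-element norm equivalence restricted to the subspace annihilated by the moments \eqref{nonconDOFs1}, \eqref{nonconDOFs2}, \eqref{nonconDOFs4}, i.e. verifying that the lone surviving curl-face moment \eqref{nonconDOFs3} genuinely controls $\fanshu[0,K]{\pmb{w}}$ with a power of $h_K$ no smaller than $l$. The one conceptual input beyond standard theory is the commuting relation of Lemma \ref{LemmaPiUPiV}: it is what lets the curl estimate decouple from the tangential degrees of freedom and turns the $\pmb{w}$-term into a pure consequence of the regularity of $\curl\pmb{u}$.
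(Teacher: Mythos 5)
Your overall architecture coincides with the paper's: you prove \eqref{errorH1PiU} via the commuting identity $\curl(\pmb{\Pi}_{U}\pmb{u})=\pmb{\Pi}_{V}\curl\pmb{u}$ plus approximation estimates for $\pmb{\Pi}_{V}$ (the paper simply cites Guzm\'an--Neilan for the latter), and \eqref{errorL2PiU} by splitting through the N\'ed\'elec interpolant. Indeed your remainder $\pmb{w}=\pmb{\Pi}_{N}\pmb{u}-\pmb{\Pi}_{U}\pmb{u}$, which by your DOF argument lies in $b_K\pmb{Q}^{k-1}(K)$, is exactly $-\pmb{\Pi}_{B}(\pmb{u}-\pmb{\Pi}_{N}\pmb{u})$, i.e.\ the paper's identity \eqref{identityPiu}; like the paper, you reduce its size to the lone surviving curl-face moment and finish with a scaled trace inequality and standard N\'ed\'elec estimates.

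The genuine gap is in how you bound $\pmb{w}$: you propose a reference-element norm equivalence transported to $K$ by the covariant Piola map, but the local spaces here are \emph{not} affine-invariant under that map. The constraint defining $\pmb{Q}_F^{k-1}(K)$ in \eqref{definitionQF} involves the physical $L^2(K)$ inner product and cross products with the physical unit normal $\pmb{n}_F$; the covariant Piola transform preserves tangential traces and (up to a contravariant Piola map) curls, but it distorts $L^2$-orthogonality relations and normals unless the affine map is a scaled isometry. Hence $\pmb{U}^k(K)$ and $b_K\pmb{Q}^{k-1}(K)$ are not the Piola images of their reference counterparts, the equivalence constant cannot be pinned on a single $\hat K$, and uniformity over the shape-regular family is precisely what remains unproved. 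The paper circumvents this by working directly on $K$ with the explicit Guzm\'an--Neilan trace identity $(\curl(b_Kb_F\,\pmb{q}_F\times\pmb{n}_F))\times\pmb{n}_F|_F=-a_Fb_F^2(\pmb{q}_F\times\pmb{n}_F)|_F$, $a_F=|\nabla\lambda_F|$: matching the curl-face moments then gives $a_F\fanshu[0,F]{\pmb{q}_F\times\pmb{n}_F}\lesssim\fanshu[0,F]{(\curl\pmb{u})\times\pmb{n}_F}$, and an elementary scaling (plus their face-to-element lemma) yields the stability bound $\fanshu[0,K]{\pmb{\Pi}_{B}\pmb{u}}\lesssim h_K^{3/2}\sum_F\fanshu[0,F]{(\curl\pmb{u})\times\pmb{n}_F}$. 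Once you substitute this for your Piola step, your exponent bookkeeping closes concretely: $h_K^{3/2}\fanshu[0,F]{\curl(\pmb{u}-\pmb{\Pi}_{N}\pmb{u})}\lesssim h_K^{3/2}\,h_K^{s-1/2}\banfan[s,K]{\curl\pmb{u}}=h_K^{s+1}\banfan[s,K]{\curl\pmb{u}}\lesssim h_K^{l}\banfan[s,K]{\curl\pmb{u}}$, so your claimed power $p\ge l$ is correct, but only after the stability step is repaired.
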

\begin{proof}
To begin with, \eqref{errorH1PiU} can be easily obtained by Lemma \ref{LemmaPiUPiV} and approximation properties of $\pmb{\Pi}_{V}$ \cite[Theorem 3.4]{guzman2012afamily}.

We now turn to \eqref{errorL2PiU}. It is easy to check that for $\pmb{u}\in\pmb{U}$ we have
\begin{align}
\pmb{u}-\pmb{\Pi}_{U}\pmb{u}=\pmb{u}-\pmb{\Pi}_{N}\pmb{u}-\pmb{\Pi}_{B}(\pmb{u}-\pmb{\Pi}_{N}\pmb{u}).\label{identityPiu}
\end{align}
The above equation suggests us to investigate the stability of $\pmb{\Pi}_{B}$. By the definition of $\pmb{B}_h$ \eqref{definitionBh}, we can write $\pmb{\Pi}_{B}\pmb{u}$ on each $K\in\mesh$ as
\begin{align*}
\pmb{\Pi}_{B}\pmb{u}|_K=b_K\sum_{F}b_F\pmb{q}_F\times\pmb{n}_F\quad\text{with }\pmb{q}_F\times\pmb{n}_F\in\pmb{Q}_F^{k-1}(K).
\end{align*}
According to \cite[Lemma 3.1]{guzman2012afamily}, we have
\begin{align}
(\curl(\pmb{\Pi}_{B}\pmb{u}))\times\pmb{n}_F|_F=-a_Fb_F^2(\pmb{q}_F\times\pmb{n}_F)|_F,\label{equation1}
\end{align}
where $a_F=|\nabla\lambda_F|$. By the above equation and the definition of $\pmb{\Pi}_{B}$, we get
\begin{align*}
a_F\langle b_F^2(\pmb{q}_F\times\pmb{n}_F),\pmb{q}_F\times\pmb{n}_F\rangle_F&=-\langle (\curl(\pmb{\Pi}_{B}\pmb{u}))\times\pmb{n}_F,\pmb{q}_F\times\pmb{n}_F\rangle_F\\
&=-\langle (\curl\pmb{u})\times\pmb{n}_F,\pmb{q}_F\times\pmb{n}_F\rangle_F\\
&\le\fanshu[0,F]{(\curl\pmb{u})\times\pmb{n}_F}\fanshu[0,F]{\pmb{q}_F\times\pmb{n}_F},
\end{align*}
which means that
\begin{align}
a_F\fanshu[0,F]{\pmb{q}_F\times\pmb{n}_F}\lesssim \fanshu[0,F]{(\curl\pmb{u})\times\pmb{n}_F}.\label{inequality1}
\end{align}
Hence, by a scaling argument argument, \eqref{equation1} and \eqref{inequality1}, we have
\begin{align*}
\fanshu[0,K]{\pmb{\Pi}_{B}\pmb{u}}&\lesssim h_K^{3/2}\sum_F\fanshu[0,F]{\curl(\pmb{\Pi}_{B}\pmb{u}\times\pmb{n}_F)}=h_K^{3/2}\sum_F\fanshu[0,F]{a_Fb_F^2(\pmb{q}_F\times\pmb{n}_F)}\notag\\
&\lesssim h_K^{3/2}\sum_Fa_F\fanshu[0,F]{(\pmb{q}_F\times\pmb{n}_F)}\lesssim h_K^{3/2}\fanshu[0,F]{(\curl\pmb{u})\times\pmb{n}_F}.
\end{align*}
Recalling \eqref{identityPiu} and using the above inequality, we obtain
\begin{align*}
\fanshu[0,K]{\pmb{u}-\pmb{\Pi}_{U}\pmb{u}}&\le \fanshu[0,K]{\pmb{u}-\pmb{\Pi}_{N}\pmb{u}}+\fanshu[0,K]{\pmb{\Pi}_{B}(\pmb{u}-\pmb{\Pi}_{N}\pmb{u})}\\
&\lesssim\fanshu[0,K]{\pmb{u}-\pmb{\Pi}_{N}\pmb{u}}+h_K^{3/2}\fanshu[0,F]{\curl(\pmb{u}-\pmb{\Pi}_{N}\pmb{u})}.
\end{align*}
As a consequence, by the approximation results of $\pmb{\Pi}_{N}$\cite{monk2003finite}, we get \eqref{errorL2PiU}, which ends our proof.
\end{proof}

\section{Application to the quad-curl problem}\label{sectionApplication}
In this section, we use the prosed nonconforming finite elements to solve the following quad-curl problem
\begin{equation}\label{quadcurlproblem}
\begin{aligned}
\nabla\times\nabla\times\nabla\times\nabla\times\pmb{u}&=\pmb{f}\ \text{in }\Omega,\\
\nabla\cdot\pmb{u}&=0\ \text{in }\Omega,\\
\pmb{u}\times\pmb{n}&=0\ \text{on }\partial\Omega,\\
(\nabla\times\pmb{u})\times\pmb{n}&=0\ \text{on }\partial\Omega,
\end{aligned}
\end{equation}
where $\pmb{f}\in \pmb{H}(\mathrm{div}^0;\Omega)$.

According to \cite{huang2022nonconforing}, the quad-curl problem \eqref{quadcurlproblem} is equivalent to
\begin{equation}\label{curlLapcurlproblem}
\begin{aligned}
-\nabla\times\Delta(\nabla\times\pmb{u})&=\pmb{f}\ \text{in }\Omega,\\
\nabla\cdot\pmb{u}&=0\ \text{in }\Omega,\\
\pmb{u}\times\pmb{n}&=0\ \text{on }\partial\Omega,\\
(\nabla\times\pmb{u})&=0\ \text{on }\partial\Omega,
\end{aligned}
\end{equation}
The mixed formulation of \eqref{curlLapcurlproblem} reads: find $(\pmb{u},p)\in \pmb{H}_0(\mathrm{grad}\ \mathrm{curl};\Omega)\times H_0^1(\Omega)$ such that
\begin{equation}\label{variationalcurlLapcurlproblem}
\begin{aligned}
a(\pmb{u},\pmb{v})+b(\pmb{v},p)&=(\pmb{f},\pmb{v})\quad\forall\pmb{v}\in \pmb{H}_0(\mathrm{grad}\ \mathrm{curl};\Omega),\\
b(\pmb{u},q)&=0\qquad\qquad\quad \forall q\in H_0^1(\Omega),
\end{aligned}
\end{equation}
where
\begin{align*}
a(\pmb{u},\pmb{v})&=(\gcurl\pmb{u},\gcurl\pmb{v}),\\
b(\pmb{v},{q})&=(\pmb{v},\nabla q).
\end{align*}

Testing $\pmb{v}=\nabla q,\ \forall q\in H_0^1(\Omega)$ in the first equation of \eqref{variationalcurlLapcurlproblem}, we obtain $p=0$ from the fact $\divv\pmb{f}=0$. Therefore we have
\begin{align}
a(\pmb{u},\pmb{v})=(\pmb{f},\pmb{v})\quad\forall\pmb{v}\in \pmb{H}_0(\mathrm{grad}\ \mathrm{curl};\Omega).
\end{align}

\subsection{Mixed finite element}

Based on \eqref{variationalcurlLapcurlproblem}, we propose the finite element method for \eqref{curlLapcurlproblem} as follows: find $(\pmb{u}_h,p_h)\in\pmb{U}_{h,0}\times W_{h,0}$ such that
\begin{align}
\begin{aligned}\label{discretecurlLapcurlproblem}
a_h(\pmb{u}_h,\pmb{v}_h)+b(\pmb{v}_h, p_h)&=(\pmb{f},\pmb{v}_h),\ \forall\pmb{v}_h\in\pmb{U}_{h,0},\\
b(\pmb{u}_h,q_h)&=0,\qquad \forall q_h\in W_{h,0},
\end{aligned}
\end{align}
where $a_h(\pmb{u}_h,\pmb{v}_h)=(\nabla_h\curl\pmb{u}_h,\nabla_h\curl\pmb{v}_h),$ and $\nabla_h$ is the elementwise version of $\nabla$ with respect to $\mesh$.
We define the following norm for $\pmb{v}_h\in\pmb{U}_h$
\begin{align}
\norma{\pmb{v}_h}^2=\fanshu[0]{\pmb{v}_h}^2+\fanshu[0]{\curl\pmb{v}_h}^2+a_h(\pmb{v}_h,\pmb{v}_h).
\end{align}

Now we discuss the well-posedness of the mixed method \eqref{discretecurlLapcurlproblem}. To this end, we introduce the following discrete Friedrichs inequality, which can be proved in a similar way as in
\cite[Lemma 7.20]{monk2003finite}.
\begin{lemma}\label{discreteFri}
Assume that $\Omega$ is a bounded Lipschitz polyhedron in $\mathbb{R}^3$. Let $\pmb{U}_{h,0}$ be defined as \eqref{definitionUh0}.  Then for $h$ small enough, we have
\begin{align*}
\fanshu[0]{\pmb{v}_h}\lesssim\fanshu[0]{\curl\pmb{v}_h},\ \forall\pmb{v}_h\in\widetilde{\pmb{Z}}_{h},
\end{align*}
where
\begin{align*}
\widetilde{\pmb{Z}}_h = \{\pmb{v}_h\in\pmb{U}_{h}\cap\pmb{H}_0(\mathrm{curl};\Omega):b(\pmb{v}_h,q_h)=0,\ \forall q_h\in W_{h,0} \}.
\end{align*}
\end{lemma}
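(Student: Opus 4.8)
The plan is to argue by contradiction via a discrete compactness argument, following \cite[Lemma 7.20]{monk2003finite}; the hypothesis ``$h$ small enough'' already indicates that the bound is obtained by excluding a bad sequence rather than by a direct estimate. Suppose the inequality fails. Then there are mesh sizes $h_n\to0$ and functions $\pmb{v}_n\in\widetilde{\pmb{Z}}_{h_n}$ with $\fanshu[0]{\pmb{v}_n}=1$ and $\fanshu[0]{\curl\pmb{v}_n}\to0$, so that $\{\pmb{v}_n\}$ is bounded in $\pmb{H}_0(\mathrm{curl};\Omega)$. The aim is to extract a subsequence converging strongly in $\pmb{L}^2(\Omega)$ to a limit $\pmb{v}$ that is forced to vanish, contradicting $\fanshu[0]{\pmb{v}_n}=1$.

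Granting such strong convergence $\pmb{v}_n\to\pmb{v}$ in $\pmb{L}^2(\Omega)$, the remainder is routine. First, $\curl\pmb{v}=\lim_n\curl\pmb{v}_n=0$ and $\pmb{v}\in\pmb{H}_0(\mathrm{curl};\Omega)$. Second, $\pmb{v}$ is weakly divergence free: for any $\phi\in C_0^\infty(\Omega)$ take $q_n=I_{h_n}\phi\in W_{h_n,0}$, so that $\nabla q_n\to\nabla\phi$ in $\pmb{L}^2(\Omega)$, and pass to the limit in the constraint $(\pmb{v}_n,\nabla q_n)=0$ to get $(\pmb{v},\nabla\phi)=0$, i.e. $\divv\pmb{v}=0$. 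Since $\Omega$ is contractible, the continuous Friedrichs inequality on $\pmb{H}_0(\mathrm{curl};\Omega)\cap\pmb{H}(\mathrm{div}^0;\Omega)$ then gives $\fanshu[0]{\pmb{v}}\lesssim\fanshu[0]{\curl\pmb{v}}=0$, hence $\pmb{v}=0$, while strong convergence yields $\fanshu[0]{\pmb{v}}=\lim_n\fanshu[0]{\pmb{v}_n}=1$, the desired contradiction.

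The hard part is therefore the discrete compactness property for the augmented space $\pmb{U}_{h,0}$: a sequence bounded in $\pmb{H}_0(\mathrm{curl};\Omega)$ and discretely divergence free is $\pmb{L}^2$-precompact. I would establish it through the continuous orthogonal Helmholtz decomposition $\pmb{v}_n=\nabla p_n+\pmb{w}_n$, with $p_n\in H_0^1(\Omega)$ and $\pmb{w}_n\in\pmb{H}_0(\mathrm{curl};\Omega)\cap\pmb{H}(\mathrm{div}^0;\Omega)$, noting $\curl\pmb{w}_n=\curl\pmb{v}_n$. The regular part $\pmb{w}_n$ is handled by the compact embedding of $\pmb{H}_0(\mathrm{curl};\Omega)\cap\pmb{H}(\mathrm{div}^0;\Omega)$ into $\pmb{L}^2(\Omega)$ (indeed, here $\fanshu[0]{\curl\pmb{w}_n}\to0$ already forces $\pmb{w}_n\to0$ by continuous Friedrichs). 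The genuine difficulty is the gradient part $\nabla p_n$, for which only the bound $\fanshu[0]{\nabla p_n}\le\fanshu[0]{\pmb{v}_n}$ is available; since $\divv\pmb{v}_n\notin L^2$ in general ($\pmb{U}_h$ carries no continuous normal trace), no $H^2$-regularity of $p_n$ may be invoked. Writing, for any $q_n\in W_{h_n,0}$,
\begin{align*}
\fanshu[0]{\nabla p_n}^2=(\pmb{v}_n,\nabla p_n)=(\pmb{v}_n,\nabla(p_n-q_n)),
\end{align*}
where the first equality uses $(\pmb{w}_n,\nabla p_n)=0$ and the second uses the discrete constraint together with $\nabla W_{h_n,0}\subset\pmb{U}_{h_n,0}$ (admissible because discrete gradients are curl free), I would close the estimate by combining Rellich compactness of $\{p_n\}$ with a commuting quasi-interpolation reproducing discrete gradients, exactly as in the classical discrete compactness theory of the N\'ed\'elec spaces.

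Finally, the feature specific to the present element — the bubble enrichment $\pmb{B}_h$, which distinguishes $\pmb{U}_{h,0}$ from $\pmb{N}_{h,0}$ — must be shown not to destroy compactness. I would control it with the local stability bound obtained inside the proof of Theorem \ref{theoreminterpo}, namely $\fanshu[0,K]{\pmb{z}}\lesssim h_K^{3/2}\fanshu[0,F]{(\curl\pmb{z})\times\pmb{n}_F}$ for $\pmb{z}\in b_K\pmb{Q}^{k-1}(K)$, which after a trace/inverse estimate bounds the bubble component by $\mathcal{O}(h)\fanshu[0]{\curl\pmb{z}}$ in $\pmb{L}^2(\Omega)$. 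Hence the bubble part vanishes in the limit and the discrete compactness of $\pmb{U}_{h,0}$ reduces to the classical one for $\pmb{N}_{h,0}$ \cite[Section 7.3]{monk2003finite}. This reduction, rather than the contradiction scaffolding, is where the main effort lies.
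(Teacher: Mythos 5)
Your overall architecture --- argue by contradiction, establish a discrete compactness property for the enriched space, and show the bubble enrichment is asymptotically negligible so that the problem reduces to the classical N\'ed\'elec theory --- is consistent with what the paper intends: the paper gives no detailed proof and simply asserts the lemma ``can be proved in a similar way as in \cite[Lemma 7.20]{monk2003finite}''. (One remark on provenance: Monk's own proof of that lemma is a \emph{direct} Hodge-mapping argument, comparing $\pmb{v}_h$ with the function $\pmb{w}\in\pmb{H}_0(\mathrm{curl};\Omega)\cap\pmb{H}(\mathrm{div}^0;\Omega)$ satisfying $\curl\pmb{w}=\curl\pmb{v}_h$; the restriction ``$h$ small enough'' there comes from absorbing an $O(h^{\delta})$ term, not from excluding a bad sequence. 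Your contradiction/compactness variant is equally classical, and the paper's commuting property $\curl(\pmb{\Pi}_U\pmb{u})=\pmb{\Pi}_V\curl\pmb{u}$ of Lemma \ref{LemmaPiUPiV} is exactly the tool that would let you run Monk's direct argument on $\pmb{U}_{h,0}$ itself, bypassing the bubble-splitting entirely; that is arguably the ``similar way'' the authors have in mind.)

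There are, however, two concrete gaps in your reduction step, both repairable but neither automatic. First, your local bound $\fanshu[0,K]{\pmb{z}}\lesssim h_K\fanshu[0,K]{\curl\pmb{z}}$ for $\pmb{z}\in b_K\pmb{Q}^{k-1}(K)$ is fine, but to conclude that the bubble component of $\pmb{v}_n$ vanishes you need $\fanshu[0]{\curl\pmb{z}_n}\lesssim\fanshu[0]{\curl\pmb{v}_n}$: writing $\pmb{v}_n=\pmb{n}_n+\pmb{z}_n$ according to \eqref{UKNKbKQF}, the two curls could a priori nearly cancel, leaving $\curl\pmb{z}_n$ large while $\curl\pmb{v}_n\to0$. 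This stability does hold, because $\curl\pmb{N}^k(K)\subset\pmb{M}^k(K)$ and $\pmb{V}^k(K)=\pmb{M}^k(K)\oplus\curl(b_K\pmb{Q}^{k-1}(K))$ is a direct sum \cite{guzman2012afamily}, so the projection onto the second summand is bounded, uniformly over shape-regular elements by scaling --- but you must state and prove this, since it is the only thing preventing the cancellation scenario. Second, the remainder $\pmb{n}_h=\pmb{v}_h-\pmb{z}_h$ does lie in $\pmb{N}_h\cap\pmb{H}_0(\mathrm{curl};\Omega)$ (the bubbles vanish on each $\partial K$), but it is \emph{not} exactly discretely divergence-free: $(\pmb{n}_h,\nabla q_h)=-(\pmb{z}_h,\nabla q_h)$, and \eqref{zrho} gives orthogonality of $\pmb{z}_h$ only against $\pmb{P}^{k-2}(K)$, whereas $\nabla q_h|_K\in\pmb{P}^{k}(K)$ --- indeed for $k=1$ there is no orthogonality at all. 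So ``reduces to the classical discrete compactness for $\pmb{N}_{h,0}$'' fails verbatim, since the classical statement assumes exact discrete solenoidality. The standard patch is to correct by a discrete gradient: take $s_h\in W_{h,0}$ with $(\nabla s_h,\nabla q_h)=(\pmb{n}_h,\nabla q_h)$ for all $q_h\in W_{h,0}$, so that $\fanshu[0]{\nabla s_h}\le\fanshu[0]{\pmb{z}_h}\to0$ and $\pmb{n}_h-\nabla s_h$ is exactly discretely divergence-free with the same curl; with these two insertions your argument closes.
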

\begin{remark}
Set
\begin{align*}
\pmb{Z}_h = \{\pmb{v}_h\in\pmb{U}_{h,0}:b(\pmb{v}_h,q_h)=0,\ \forall q_h\in W_{h,0} \}.
\end{align*}
Due to $\pmb{Z}_h\subset\widetilde{\pmb{Z}_h}$, we also have
\begin{align*}
\fanshu[0]{\pmb{v}_h}\lesssim\fanshu[0]{\curl\pmb{v}_h},\ \forall\pmb{v}_h\in\pmb{Z}_{h}.
\end{align*}
\end{remark}

We also introduce the following discrete Poincar\'{e}--Friedrichs for $\pmb{v}_h\in\pmb{V}_{h,0}$, which is a direct result of \cite{Brenner2003Poincare}.
\begin{lemma}\label{discretePoincareFri}
Let $\pmb{V}_{h,0}$ be defined as \eqref{definitionVh0}.  Then we have
\begin{align*}
\fanshu[0]{\pmb{v}_h}\lesssim\fanshu[0]{\nabla_h\pmb{v}_h},\ \forall\pmb{v}_h\in\pmb{V}_{h,0}.
\end{align*}
\end{lemma}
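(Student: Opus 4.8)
The plan is to reduce this to the abstract Poincaré--Friedrichs inequality for piecewise $H^1$ functions proved in \cite{Brenner2003Poincare}, and then to check that the continuity built into $\pmb{V}_{h,0}$ kills all the jump terms on its right-hand side. Brenner's result, for a \emph{scalar} $w$ with $w|_K\in H^1(K)$, bounds the $L^2$ norm by the broken $H^1$ seminorm plus averaged face contributions,
\[
\fanshu[0]{w}\lesssim\Big(\fanshu[0]{\nabla_h w}^2+\sum_{F\in\face^i}\tfrac{1}{|F|}\big|\textstyle\int_F\jump{w}\,ds\big|^2+\sum_{F\in\face^b}\tfrac{1}{|F|}\big|\textstyle\int_F w\,ds\big|^2\Big)^{1/2},
\]
where $\jump{w}=w|_{K^+}-w|_{K^-}$ is the usual scalar jump. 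First I would apply this to each Cartesian component $v_{h,i}$ of $\pmb{v}_h$ and sum over $i=1,2,3$, so that the whole claim collapses to showing that the mean of the \emph{full vector} jump $\int_F\jump{\pmb{v}_h}\,ds$ vanishes on every interior face and that $\int_F\pmb{v}_h\,ds=\pmb{0}$ on every boundary face.

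To verify the interior-face condition I would fix the orientation $\pmb{n}_F=\pmb{n}_{K^+}$ and use, on each flat simplicial face (where $\pmb{n}_F$ is constant), the orthogonal decomposition $\jump{\pmb{v}_h}=\jump{\pmb{v}_h\cdot\pmb{n}}\,\pmb{n}_F+\pmb{n}_F\times\jump{\pmb{v}_h\times\pmb{n}}$. The first term vanishes identically because $\pmb{V}_{h,0}\subset\pmb{H}(\mathrm{div};\Omega)$ forces $\jump{\pmb{v}_h\cdot\pmb{n}}|_F=0$; for the second term, the defining jump condition $\langle\jump{\pmb{v}_h\times\pmb{n}},\pmb{\chi}\rangle_F=0$ for all $\pmb{\chi}\in\pmb{P}^{k-1}(F)$, tested against constants (which are admissible since $k\ge1$), yields $\int_F\jump{\pmb{v}_h\times\pmb{n}}\,ds=\pmb{0}$. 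Hence $\int_F\jump{\pmb{v}_h}\,ds=\pmb{0}$ componentwise. On a boundary face the same two ingredients apply: $\pmb{V}_{h,0}\subset\pmb{H}_0(\mathrm{div};\Omega)$ gives $\pmb{v}_h\cdot\pmb{n}=0$, while the boundary jump condition in \eqref{definitionVh0} again annihilates the averaged tangential trace, so $\int_F\pmb{v}_h\,ds=\pmb{0}$.

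Putting these together, every face term in the componentwise Brenner inequality drops out, and summing over $i$ gives $\fanshu[0]{\pmb{v}_h}\lesssim\fanshu[0]{\nabla_h\pmb{v}_h}$, as claimed.

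The genuine content here is not the broken inequality itself, which is quoted, but the translation of the $\pmb{H}(\mathrm{div})$-conformity and the tangential-moment jump conditions into the single scalar hypothesis ``the mean of the jump vanishes'' that Brenner requires. The step needing the most care is precisely that Brenner's theorem is phrased for scalar piecewise $H^1$ functions, so I must argue that controlling the averaged normal and tangential jumps is equivalent to controlling the mean of each Cartesian component; this is exactly where the orthogonal decomposition above, the constancy of $\pmb{n}_F$ on each flat face, and the hypothesis $k\ge1$ (ensuring constants lie in $\pmb{P}^{k-1}(F)$) all enter.
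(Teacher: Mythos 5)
Your proof is correct and matches the paper's approach: the paper proves this lemma simply by citing \cite{Brenner2003Poincare} as ``a direct result,'' and your argument is precisely the verification that citation implies --- applying Brenner's scalar piecewise-$H^1$ Poincar\'e--Friedrichs inequality componentwise and using the normal continuity from $\pmb{H}(\mathrm{div})$-conformity together with the tangential moment conditions in \eqref{definitionVh0} (tested against constants, admissible since $k\ge1$) to annihilate all face terms. Your care about the orientation convention for the jumps and the constancy of $\pmb{n}_F$ on flat faces fills in exactly the details the paper leaves implicit.
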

\begin{lemma}
We have the discrete stability
\begin{align}\label{discreteStability}
&\norma{\widetilde{\pmb{u}}_h}+|\widetilde{p}_h|_1\lesssim\sup_{(\pmb{v}_h,q_h)\in\pmb{U}_{h,0}\times W_{h,0}}\frac{a_h(\widetilde{\pmb{u}}_h,\pmb{v}_h)+b(\pmb{v}_h,\widetilde{p}_h)+b(\widetilde{\pmb{u}}_h,q_h)}{\norma{\pmb{v}_h}+|q_h|_1}
\end{align}
for any $(\widetilde{\pmb{u}}_h,\widetilde{p}_h)\in\pmb{U}_{h,0}\times W_{h,0}$.
\end{lemma}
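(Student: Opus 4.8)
The plan is to recognize \eqref{discreteStability} as the global inf-sup (Babu\v{s}ka--Brezzi) stability of the saddle-point form $\mathcal{B}((\pmb{u}_h,p_h),(\pmb{v}_h,q_h))=a_h(\pmb{u}_h,\pmb{v}_h)+b(\pmb{v}_h,p_h)+b(\pmb{u}_h,q_h)$ and to derive it from the two classical Brezzi conditions: coercivity of $a_h$ on the kernel and the inf-sup condition for $b$. The structural fact that makes everything explicit is that $\nabla W_{h,0}\subset\pmb{U}_{h,0}$: indeed $\nabla P^{k+1}(K)\subset\pmb{P}^k(K)\subset\pmb{N}^k(K)\subset\pmb{U}^k(K)$, every gradient is curl-free so the interior and boundary tangential-jump constraints in \eqref{definitionUh}--\eqref{definitionUh0} hold trivially, and $w\in H_0^1(\Omega)$ forces $\nabla w\times\pmb{n}=0$ on $\partial\Omega$. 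Since $b(\pmb{v}_h,q_h)=(\pmb{v}_h,\nabla q_h)$, the kernel $\pmb{Z}_h$ is exactly the $L^2$-orthogonal complement of $\nabla W_{h,0}$ inside $\pmb{U}_{h,0}$, which yields the orthogonal splitting $\pmb{U}_{h,0}=\pmb{Z}_h\oplus\nabla W_{h,0}$.

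The first main step is coercivity of $a_h$ on $\pmb{Z}_h$, i.e. $\norma{\pmb{v}_h}^2\lesssim a_h(\pmb{v}_h,\pmb{v}_h)$ for $\pmb{v}_h\in\pmb{Z}_h$. I would first check that $\curl\pmb{v}_h\in\pmb{V}_{h,0}$: by Lemma \ref{LemmaPiUPiV} its restrictions lie in $\pmb{V}^k(K)$, the tangential-jump condition is built into the definition of $\pmb{U}_{h,0}$, and $\pmb{v}_h\in\pmb{H}_0(\curl;\Omega)$ gives $\curl\pmb{v}_h\cdot\pmb{n}=0$ on $\partial\Omega$, hence $\curl\pmb{v}_h\in\pmb{H}_0(\mathrm{div};\Omega)$. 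Then Lemma \ref{discretePoincareFri} applied to $\curl\pmb{v}_h$ gives $\fanshu[0]{\curl\pmb{v}_h}\lesssim\fanshu[0]{\nabla_h\curl\pmb{v}_h}=a_h(\pmb{v}_h,\pmb{v}_h)^{1/2}$, while the discrete Friedrichs inequality (Lemma \ref{discreteFri} and its Remark for $\pmb{Z}_h$) gives $\fanshu[0]{\pmb{v}_h}\lesssim\fanshu[0]{\curl\pmb{v}_h}$; adding the three contributions in the definition of $\norma{\cdot}$ yields the claim. This verification---and especially the membership $\curl\pmb{v}_h\in\pmb{V}_{h,0}$ that licenses Lemma \ref{discretePoincareFri}---is the part I expect to require the most care, since the whole argument collapses if the two discrete Friedrichs-type inequalities cannot be chained.

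The inf-sup condition for $b$ is then immediate: given $q_h\in W_{h,0}$, choose $\pmb{v}_h=\nabla q_h\in\pmb{U}_{h,0}$; since $\curl\nabla q_h=0$ one has $\norma{\nabla q_h}=\fanshu[0]{\nabla q_h}=|q_h|_1$, whereas $b(\nabla q_h,q_h)=|q_h|_1^2$, so the supremum over $\pmb{U}_{h,0}$ is at least $|q_h|_1$.

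Finally I would assemble the global bound by an explicit test-function construction rather than quoting the abstract theorem. Given $(\widetilde{\pmb{u}}_h,\widetilde{p}_h)$, write $\widetilde{\pmb{u}}_h=\widetilde{\pmb{u}}_h^0+\nabla\phi_h$ with $\widetilde{\pmb{u}}_h^0\in\pmb{Z}_h$ and $\phi_h\in W_{h,0}$; note $|\phi_h|_1=\fanshu[0]{\nabla\phi_h}\le\fanshu[0]{\widetilde{\pmb{u}}_h}\le\norma{\widetilde{\pmb{u}}_h}$, and using $\curl\nabla\phi_h=0$ together with the kernel coercivity one obtains $\norma{\widetilde{\pmb{u}}_h}^2\lesssim a_h(\widetilde{\pmb{u}}_h,\widetilde{\pmb{u}}_h)+|\phi_h|_1^2$. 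Then take the test pair $(\pmb{v}_h,q_h)=(\widetilde{\pmb{u}}_h+\nabla\widetilde{p}_h,\,-\widetilde{p}_h+\phi_h)$. A direct computation, using $a_h(\cdot,\nabla\widetilde{p}_h)=0$, $b(\nabla\widetilde{p}_h,\widetilde{p}_h)=|\widetilde{p}_h|_1^2$ and $b(\widetilde{\pmb{u}}_h,\phi_h)=|\phi_h|_1^2$, cancels the cross terms $\pm b(\widetilde{\pmb{u}}_h,\widetilde{p}_h)$ and gives $\mathcal{B}((\widetilde{\pmb{u}}_h,\widetilde{p}_h),(\pmb{v}_h,q_h))=a_h(\widetilde{\pmb{u}}_h,\widetilde{\pmb{u}}_h)+|\widetilde{p}_h|_1^2+|\phi_h|_1^2\gtrsim\norma{\widetilde{\pmb{u}}_h}^2+|\widetilde{p}_h|_1^2$. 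Since also $\norma{\pmb{v}_h}+|q_h|_1\lesssim\norma{\widetilde{\pmb{u}}_h}+|\widetilde{p}_h|_1$ by the triangle inequality and the bound on $|\phi_h|_1$, dividing yields \eqref{discreteStability}.
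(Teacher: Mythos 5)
Your proposal is correct, and its two pillars are exactly the paper's: coercivity of $a_h$ on the discrete kernel $\pmb{Z}_h$ (chaining Lemma \ref{discretePoincareFri} applied to $\curl\pmb{v}_h\in\pmb{V}_{h,0}$ with the discrete Friedrichs inequality of Lemma \ref{discreteFri}), and the inf-sup condition for $b$ via the test function $\pmb{v}_h=\nabla q_h$, using $\nabla W_{h,0}\subset\pmb{U}_{h,0}$ and $\norma{\nabla q_h}=|q_h|_1$. Where you deviate is the final assembly: the paper simply invokes the abstract Babu\v{s}ka--Brezzi theory at this point, whereas you re-derive the global stability by hand, splitting $\widetilde{\pmb{u}}_h=\widetilde{\pmb{u}}_h^0+\nabla\phi_h$ along the $L^2$-orthogonal decomposition $\pmb{U}_{h,0}=\pmb{Z}_h\oplus\nabla W_{h,0}$ and testing with $(\widetilde{\pmb{u}}_h+\nabla\widetilde{p}_h,\,-\widetilde{p}_h+\phi_h)$. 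I checked the cancellation: since $a_h(\cdot,\nabla\widetilde{p}_h)=0$, $b(\nabla\widetilde{p}_h,\widetilde{p}_h)=|\widetilde{p}_h|_1^2$ and $(\widetilde{\pmb{u}}_h^0,\nabla\phi_h)=0$ gives $b(\widetilde{\pmb{u}}_h,\phi_h)=|\phi_h|_1^2$, the cross terms $\pm b(\widetilde{\pmb{u}}_h,\widetilde{p}_h)$ indeed cancel, and $\curl\widetilde{\pmb{u}}_h=\curl\widetilde{\pmb{u}}_h^0$ together with kernel coercivity yields $\norma{\widetilde{\pmb{u}}_h}^2\lesssim a_h(\widetilde{\pmb{u}}_h,\widetilde{\pmb{u}}_h)+|\phi_h|_1^2$, so the division step is legitimate. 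In effect you have specialized the proof of the Brezzi theorem to this problem, which the exact-sequence structure ($\nabla W_{h,0}\subset\pmb{U}_{h,0}$, gradients curl-free, hence $a_h$-degenerate directions exactly complementing $\pmb{Z}_h$) makes unusually clean: your route buys a self-contained argument with traceable constants, and you also supply verifications the paper leaves implicit, notably that $\curl\pmb{v}_h\in\pmb{V}_{h,0}$ for $\pmb{v}_h\in\pmb{U}_{h,0}$ (tangential jumps from \eqref{definitionUh0}, normal trace from $\pmb{v}_h\times\pmb{n}=0$ on $\partial\Omega$) and that $\nabla W_{h,0}\subset\pmb{U}_{h,0}$; the paper's citation of the abstract theory is shorter but hides these checks.
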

\begin{proof}
For any $\pmb{v}_h\in\pmb{Z}_h$, noting $\curl\pmb{v}_h\in\pmb{V}_{h,0}$, then using Lemma \ref{discretePoincareFri} and Lemma \ref{discreteFri}, we derive the coercivity
\begin{align}
\norma{\pmb{v}_h}^2\lesssim \fanshu[0]{\curl\pmb{v}_h}^2+\fanshu[0]{\nabla_h\curl\pmb{v}_h}^2\lesssim a_h(\pmb{v}_h,\pmb{v}_h).
\end{align}
Since $\nabla q_h\in \pmb{U}_{h,0},\ \forall q_h\in W_{h,0}$, we have the inf-sup condition
\begin{align}
\sup_{\pmb{0}\neq\pmb{v}_h\in\pmb{U}_h}\frac{b(\pmb{v}_h,q_h)}{\norma{\pmb{v}_h}}\ge \frac{b(\nabla q_h,q_h)}{\norma{\nabla q_h}} = |q_h|_1 \quad\forall q_h\in W_{h,0}.
\end{align}
As a result, the discrete stability follows from the Babu\v{s}ka-Brezzi theory \cite{boffi2013mixed}.
\end{proof}

From \eqref{discreteStability}, we can easily get that the mixed method \eqref{discretecurlLapcurlproblem} is well-posed. As the continuous case, testing $\pmb{v}_h=\nabla q_h,\ \forall q_h\in W_{h,0}$ in the first equation of \eqref{discretecurlLapcurlproblem}, we deduce $p_h=0$ from the fact $\divv\pmb{f}=0$. Therefore, the solution $\pmb{u}_h\in\pmb{U}_{h,0}$ satisfies
\begin{align}
a_h(\pmb{u}_h,\pmb{v}_h)=(\pmb{f},\pmb{v}_h),\quad\forall\pmb{v}_h\in\pmb{U}_{h,0}.\label{equationahfh}
\end{align}

\subsection{Error analysis}
We now turn to the error analysis for the mixed finite element method \eqref{discretecurlLapcurlproblem}. To begin with, we show the consistency error estimates for the proposed nonconforming method.
\begin{lemma}\label{Lemmaconsistencyerr}
Let $\pmb{u}\in\pmb{H}(\mathrm{grad}~\mathrm{curl};\Omega)$ be the solution of \eqref{curlLapcurlproblem}, and $s$ the integer satisfying $2\le s\le k+1$. If $\pmb{u},\curl\pmb{u}\in \pmb{H}^{s}(\Omega)$, then we have for any $\pmb{v}_h\in\pmb{U}_{h,0}$ that
\begin{align}
a_h(\pmb{u},\pmb{v}_h)-(\pmb{f},\pmb{v}_h)\lesssim h^{s-1}|\curl\pmb{u}|_{s}\norma{\pmb{v}_h}.
\end{align}
\end{lemma}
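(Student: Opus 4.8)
The plan is to estimate the consistency error
\[
a_h(\pmb{u},\pmb{v}_h)-(\pmb{f},\pmb{v}_h)
\]
by integrating by parts elementwise and exploiting the weak tangential continuity of $\curl\pmb{v}_h$ guaranteed by Lemma \ref{Lemmajumpcurl}. Recall that $\pmb{u}$ solves $-\curl\Delta(\curl\pmb{u})=\pmb{f}$, so I would first rewrite $(\pmb{f},\pmb{v}_h)=(-\curl\Delta(\curl\pmb{u}),\pmb{v}_h)$. Integrating by parts on each element $K$ (moving the outer $\curl$ onto $\pmb{v}_h$) turns the volume term into $(\Delta\curl\pmb{u},\curl\pmb{v}_h)_K$ plus boundary contributions involving $(\Delta\curl\pmb{u})\times\pmb{n}$ tested against $\pmb{v}_h$; since $\pmb{u}_h\in\pmb{U}_{h,0}\subset\pmb{H}(\mathrm{curl};\Omega)$ has continuous tangential components and vanishing tangential trace on $\partial\Omega$, those particular boundary terms cancel across interior faces and vanish on the boundary.

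Next I would integrate by parts once more on the remaining volume term $(\Delta\curl\pmb{u},\curl\pmb{v}_h)_K$ to match it against $a_h(\pmb{u},\pmb{v}_h)=\sum_K(\nabla\curl\pmb{u},\nabla\curl\pmb{v}_h)_K$. This produces
\[
a_h(\pmb{u},\pmb{v}_h)-(\pmb{f},\pmb{v}_h)=\sum_{F\in\face}\big\langle (\nabla\curl\pmb{u})\,\pmb{n}_F,\jump{\curl\pmb{v}_h}\big\rangle_F,
\]
i.e. the error collapses to a sum of face terms pairing the normal derivative of $\curl\pmb{u}$ against the jump of $\curl\pmb{v}_h$. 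The key algebraic point is that $\curl\pmb{v}_h$ is \emph{not} continuous, so these jumps are the sole source of inconsistency; the interior face integrals survive precisely because $\pmb{v}_h\notin\pmb{H}(\mathrm{grad}~\mathrm{curl})$.

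The crux of the argument, and the step I expect to be the main obstacle, is bounding each face term. Here I would subtract a face-wise constant (or, more precisely, the $\pmb{P}^{k-1}(F)$-projection) $\pmb{\Pi}_F^{k-1}\big((\nabla\curl\pmb{u})\pmb{n}_F\big)$ from $(\nabla\curl\pmb{u})\pmb{n}_F$: Lemma \ref{Lemmajumpcurl} tells us that $\jump{\curl\pmb{v}_h}$ is $\pmb{L}^2$-orthogonal to $\pmb{P}^{k-1}(F)$ on interior faces, so the projected part contributes nothing and we are left with
\[
\big\langle (I-\pmb{\Pi}_F^{k-1})\big((\nabla\curl\pmb{u})\pmb{n}_F\big),\jump{\curl\pmb{v}_h}\big\rangle_F.
\]
I would then apply Cauchy--Schwarz on each face, use a standard polynomial approximation estimate on $F$ to obtain $\fanshu[0,F]{(I-\pmb{\Pi}_F^{k-1})((\nabla\curl\pmb{u})\pmb{n}_F)}\lesssim h_F^{s-3/2}|\curl\pmb{u}|_{s,K}$ via a trace/scaling argument, and bound $\fanshu[0,F]{\jump{\curl\pmb{v}_h}}$ by $h_F^{1/2}$ times an elementwise $\fanshu[0]{\nabla_h\curl\pmb{v}_h}$ through a scaled trace inequality. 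The boundary faces are handled analogously using the vanishing trace conditions built into $\pmb{U}_{h,0}$.

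Finally I would sum over all faces, apply the Cauchy--Schwarz inequality for sums, and collect the powers of $h$: each term scales like $h^{s-1}$, and the factor $\fanshu[0]{\nabla_h\curl\pmb{v}_h}$ is controlled by $\norma{\pmb{v}_h}$. This yields
\[
a_h(\pmb{u},\pmb{v}_h)-(\pmb{f},\pmb{v}_h)\lesssim h^{s-1}|\curl\pmb{u}|_{s}\,\norma{\pmb{v}_h},
\]
as claimed. The delicate bookkeeping is keeping track of the correct powers of $h_F$ in the trace and approximation estimates so that the orthogonality from Lemma \ref{Lemmajumpcurl} is fully exploited; without that orthogonality one would only gain $h^{s-2}$, so the weak tangential continuity of $\curl\pmb{v}_h$ is exactly what produces the optimal consistency order.
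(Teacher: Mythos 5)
Your argument is correct and is essentially the paper's own proof: elementwise integration by parts reduces the consistency error to the face sum $\sum_{F\in\face}\langle\partial_n\curl\pmb{u},\jump{\curl\pmb{v}_h}\rangle_F$, and the $\pmb{P}^{k-1}(F)$-orthogonality of Lemma \ref{Lemmajumpcurl} is then exploited by inserting the projection $\pmb{Q}_F^{k-1}$ before Cauchy--Schwarz and trace/scaling estimates, exactly as in the paper. The one step you should make explicit is that your bound $\fanshu[0,F]{\jump{\curl\pmb{v}_h}}\lesssim h_F^{1/2}\fanshu[0]{\nabla_h\curl\pmb{v}_h}$ is not a bare scaled trace inequality (that alone leaves an extra $h_F^{-1/2}\fanshu[0,K]{\curl\pmb{v}_h}$ term and would degrade the rate to $h^{s-2}$): it holds because $\jump{\curl\pmb{v}_h}$ has zero mean on each face (apply Lemma \ref{Lemmajumpcurl} with constant $\pmb{\chi}$, valid since $k\ge1$), which is precisely the paper's additional subtraction of $\pmb{Q}_F^{0}\jump{\curl\pmb{v}_h}$ in its two-sided projection trick.
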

\begin{proof}
Integrating by parts, we obtain
\begin{align*}
a_h(\pmb{u},\pmb{v}_h)-(\pmb{f},\pmb{v}_h)=\sum_{K\in\mesh}\langle\partial_n\curl\pmb{u},\curl\pmb{v}_h\rangle_{\partial K}.
\end{align*}%-(\nabla\times\Delta\curl\pmb{u},\pmb{v}_h)+-(\pmb{f},\pmb{v}_h)
By Lemma \ref{Lemmajumpcurl}, we derive
\begin{align*}
&\sum_{K\in\mesh}\langle\partial_n\curl\pmb{u},\curl\pmb{v}_h\rangle_{\partial K}\\
&=\sum_{F\in\face}\langle\partial_n\curl\pmb{u}-\pmb{Q}_F^{k-1}\partial_n\curl\pmb{u},\jump{\curl\pmb{v}_h}-\pmb{Q}_F^{0}\jump{\curl\pmb{v}_h}\rangle_F\\
&\lesssim h^{s-1}|\curl\pmb{u}|_s\norma{\pmb{v}_h},
\end{align*}
where $\pmb{Q}_F^{k-1}$ is the $L^2$-orthogonal projection from $\pmb{L}^2(F)$ to $\pmb{P}^{k-1}(F)$. We have thus proved the lemma.
\end{proof}

With the help of preceding lemmas we can now prove the \textit{a priori} error estimate.
\begin{theorem}\label{theoremerrorcurlcurl}
Let $\pmb{u}\in\pmb{H}(\mathrm{grad}~\mathrm{curl};\Omega)$ be the solution of \eqref{curlLapcurlproblem}, and $\pmb{u}_h\in\pmb{U}_h$ the solution of \eqref{discretecurlLapcurlproblem}. If $\pmb{u},\curl\pmb{u}\in \pmb{H}^{s}(\Omega)$ with $2\le s\le k+1$, then we have
\begin{align}
\norma{\pmb{u}-\pmb{u}_h}\lesssim h^{s-1}(|\pmb{u}|_{s}+|\curl\pmb{u}|_{s}).
\end{align}
\end{theorem}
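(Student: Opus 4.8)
The plan is to follow the standard second Strang lemma framework for nonconforming mixed methods, combining the coercivity/inf-sup package already established in \eqref{discreteStability} with the interpolation error estimates of Theorem \ref{theoreminterpo} and the consistency estimate of Lemma \ref{Lemmaconsistencyerr}. Since $p=0$ in the continuous problem and $p_h=0$ in the discrete problem (both following from $\divv\pmb{f}=0$), I expect the pressure contribution to drop out and the essential work to reduce to estimating $\norma{\pmb{u}-\pmb{u}_h}$ through the velocity component alone.

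First I would split the error via the interpolant, writing
\begin{align*}
\norma{\pmb{u}-\pmb{u}_h}\le\norma{\pmb{u}-\pmb{\Pi}_U\pmb{u}}+\norma{\pmb{\Pi}_U\pmb{u}-\pmb{u}_h},
\end{align*}
and handle the two pieces separately. The approximation term $\norma{\pmb{u}-\pmb{\Pi}_U\pmb{u}}$ is bounded directly by expanding the triple-bar norm into its three constituents $\fanshu[0]{\cdot}$, $\fanshu[0]{\curl\,\cdot}$ and $a_h(\cdot,\cdot)^{1/2}$, and then applying \eqref{errorL2PiU} and \eqref{errorH1PiU} from Theorem \ref{theoreminterpo} (with $l=s$ and $m=0,1$), which yields the desired $h^{s-1}(|\pmb{u}|_s+|\curl\pmb{u}|_s)$ rate; note the weakest term is the broken gradient of the curl, giving the $h^{s-1}$ order.

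For the discrete error $\pmb{e}_h:=\pmb{\Pi}_U\pmb{u}-\pmb{u}_h\in\pmb{U}_{h,0}$, I would apply the coercivity bound $\norma{\pmb{e}_h}^2\lesssim a_h(\pmb{e}_h,\pmb{e}_h)$ valid on $\pmb{Z}_h$ (after first checking $\pmb{e}_h$ lies in the constraint set, using $b(\pmb{u}_h,q_h)=0$ and the commuting property of the interpolation, or more simply by invoking the full stability \eqref{discreteStability}). Then I would write
\begin{align*}
a_h(\pmb{e}_h,\pmb{v}_h)=a_h(\pmb{\Pi}_U\pmb{u}-\pmb{u},\pmb{v}_h)+\bigl(a_h(\pmb{u},\pmb{v}_h)-(\pmb{f},\pmb{v}_h)\bigr),
\end{align*}
where the second bracket is exactly the consistency residual controlled by Lemma \ref{Lemmaconsistencyerr} at order $h^{s-1}|\curl\pmb{u}|_s\,\norma{\pmb{v}_h}$, and the first term is bounded by Cauchy--Schwarz together with \eqref{errorH1PiU}. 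Taking $\pmb{v}_h=\pmb{e}_h$ and dividing through gives the discrete error at the same $h^{s-1}$ rate, and the triangle inequality closes the argument.

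The main obstacle I anticipate is the bookkeeping around the constraint sets and the pressure: one must verify that testing against $\pmb{e}_h$ legitimately activates coercivity, i.e. that $\pmb{e}_h$ respects the divergence-type constraint defining $\pmb{Z}_h$, which relies on the commuting-diagram relation $\curl(\pmb{\Pi}_U\pmb{u})=\pmb{\Pi}_V\curl\pmb{u}$ of Lemma \ref{LemmaPiUPiV} and on $\nabla W_{h,0}\subset\pmb{U}_{h,0}$. Provided this inclusion is handled cleanly, the remaining estimates are routine applications of the already-proven lemmas, so the proof is essentially an assembly of Theorem \ref{theoreminterpo}, Lemma \ref{Lemmaconsistencyerr}, and the coercivity half of \eqref{discreteStability}.
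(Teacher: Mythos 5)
Your overall architecture is sound, but the primary route you propose for the discrete error has a genuine gap: $\pmb{e}_h=\pmb{\Pi}_{U}\pmb{u}-\pmb{u}_h$ does \emph{not}, in general, lie in the constraint set $\pmb{Z}_h$. You would need $b(\pmb{\Pi}_{U}\pmb{u},q_h)=(\pmb{\Pi}_{U}\pmb{u},\nabla q_h)=0$ for all $q_h\in W_{h,0}$, and neither ingredient you cite delivers this. The commuting relation of Lemma \ref{LemmaPiUPiV} is $\curl(\pmb{\Pi}_{U}\pmb{u})=\pmb{\Pi}_{V}\curl\pmb{u}$; it constrains curls and says nothing about the divergence-type constraint $b(\cdot,q_h)=0$. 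And the volume degrees of freedom \eqref{nonconInterp4} only enforce $(\pmb{\Pi}_{U}\pmb{u}-\pmb{u},\pmb{\rho})_K=0$ for $\pmb{\rho}\in\pmb{P}^{k-2}(K)$, while $\nabla q_h|_K\in\pmb{P}^{k}(K)$, so $(\pmb{\Pi}_{U}\pmb{u}-\pmb{u},\nabla q_h)$ need not vanish (integration by parts does not help either, since the interpolation controls tangential, not normal, traces). Interpolants of divergence-free fields are generically not discretely divergence-free, so coercivity on $\pmb{Z}_h$ cannot be ``legitimately activated'' by testing with $\pmb{e}_h$ the way you describe. This is visible in the paper's own proof: the term $b(\pmb{\Pi}_{U}\pmb{u}-\pmb{u},q_h)$ is retained and absorbed into $\fanshu[0]{\pmb{\Pi}_{U}\pmb{u}-\pmb{u}}\,|q_h|_1$ precisely because membership in $\pmb{Z}_h$ fails.

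Your hedged alternative --- ``more simply by invoking the full stability \eqref{discreteStability}'' --- is the correct fix, and once you take it your argument coincides with the paper's: apply \eqref{discreteStability} with $\widetilde{\pmb{u}}_h=\pmb{\Pi}_{U}\pmb{u}-\pmb{u}_h$ and $\widetilde{p}_h=0$, use $a_h(\pmb{u}_h,\pmb{v}_h)=(\pmb{f},\pmb{v}_h)$ (valid since $p_h=0$) together with $b(\pmb{u}_h,q_h)=b(\pmb{u},q_h)=0$, split
\begin{align*}
a_h(\pmb{\Pi}_{U}\pmb{u},\pmb{v}_h)-(\pmb{f},\pmb{v}_h)=a_h(\pmb{\Pi}_{U}\pmb{u}-\pmb{u},\pmb{v}_h)+\bigl(a_h(\pmb{u},\pmb{v}_h)-(\pmb{f},\pmb{v}_h)\bigr),
\end{align*}
and bound the pieces by Theorem \ref{theoreminterpo} and Lemma \ref{Lemmaconsistencyerr}, then conclude by the triangle inequality. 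Your treatment of the approximation term $\norma{\pmb{u}-\pmb{\Pi}_{U}\pmb{u}}$ and of the consistency residual is correct as written; only the claimed $\pmb{Z}_h$-membership of $\pmb{e}_h$ must be abandoned in favor of the full stability estimate, which handles the nonzero $b(\pmb{\Pi}_{U}\pmb{u}-\pmb{u},q_h)$ contribution at no loss of order.
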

\begin{proof}
Applying \eqref{discreteStability} with $\widetilde{\pmb{u}}_h=\pmb{\Pi}_{U}\pmb{u}-\pmb{u}_h$ and $\widetilde{p}_h=0$, recalling \eqref{equationahfh} and $\pmb{u}_h\in\pmb{Z}_h$, we have
\begin{align*}
\norma{\pmb{\Pi}_{U}\pmb{u}-\pmb{u}_h}&\lesssim\sup_{(\pmb{v}_h,q_h)\in\pmb{U}_{h,0}\times W_{h,0}}\frac{a_h(\pmb{\Pi}_{U}\pmb{u}-\pmb{u}_h,\pmb{v}_h)+b(\pmb{\Pi}_{U}\pmb{u}-\pmb{u}_h,q_h)}{\norma{\pmb{v}_h}+|q_h|_1}\\
&=\sup_{(\pmb{v}_h,q_h)\in\pmb{U}_{h,0}\times W_{h,0}}\frac{a_h(\pmb{\Pi}_{U}\pmb{u},\pmb{v}_h)-(\pmb{f},\pmb{v}_h)+b(\pmb{\Pi}_{U}\pmb{u}-\pmb{u},q_h)}{\norma{\pmb{v}_h}+|q_h|_1}\\
&\lesssim\fanshu[0]{\pmb{\Pi}_{U}\pmb{u}-\pmb{u}}+\fanshu[0]{\nabla_h\curl(\pmb{\Pi}_{U}\pmb{u}-\pmb{u})}+\sup_{\pmb{v}_h\in\pmb{U}_{h,0}}\frac{a_h(\pmb{u},\pmb{v}_h)-(\pmb{f},\pmb{v}_h)}{\norma{\pmb{v}_h}}.
\end{align*}
Using Theorem \ref{theoreminterpo}, Lemma \ref{Lemmaconsistencyerr} and the triangle inequality, we can now conclude the desired result.
\end{proof}

Next we present an optimal estimate for the error $\fanshu[0]{\curl(\pmb{u}-\pmb{u}_h)}$. For this purpose, we consider the dual problem
\begin{equation}\label{dualproble}
\begin{aligned}
-\nabla\times\Delta(\nabla\times\widetilde{\pmb{u}})&=\curlc(\pmb{u}-\pmb{u}_h)\ \text{in }\Omega,\\
\nabla\cdot\widetilde{\pmb{u}}&=0\qquad\qquad\qquad \text{ in }\Omega,\\
\widetilde{\pmb{u}}\times\pmb{n}&=0\qquad\qquad\qquad \text{ on }\partial\Omega,\\
(\nabla\times\widetilde{\pmb{u}})&=0\qquad\qquad\qquad \text{ on }\partial\Omega,
\end{aligned}
\end{equation}
where $\widetilde{\pmb{u}}\in\pmb{H}_0(\mathrm{grad}~\mathrm{curl})$. It is worth pointing out that the first equation of \eqref{dualproble} holds in the sense of $\pmb{H}^{-1}(\mathrm{div};\Omega)$, where $\pmb{H}^{-1}(\mathrm{div};\Omega)$ is the dual space of $\pmb{H}_0(\mathrm{curl};\Omega)$\cite{chen2018decoupling}. According to \cite[Lemma A.1]{huang2022nonconforing}, if $\Omega$ is convex, then the following regularity holds
\begin{align}\label{regularitydual}
\fanshu[1]{\widetilde{\pmb{u}}}+\fanshu[2]{\curl\widetilde{\pmb{u}}}\lesssim\fanshu[-1]{\curlc(\pmb{u}-\pmb{u}_h)}\lesssim\fanshu[0]{\curl(\pmb{u}-\pmb{u}_h)}.
\end{align}
\begin{theorem}\label{theoremErrHcurl}
Under the conditions of Theorem \ref{theoremerrorcurlcurl}, we further assume that the regularity \eqref{regularitydual} holds. Then we have
\begin{align}
\fanshu[0]{\curl(\pmb{u}-\pmb{u}_h)}\lesssim h^s(|\pmb{u}|_{s}+|\curl\pmb{u}|_{s}).\label{errorHcurl}
\end{align}
\end{theorem}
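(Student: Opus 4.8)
The plan is to run an Aubin--Nitsche duality argument based on the dual problem \eqref{dualproble}, exploiting the elevated regularity $\curl\widetilde{\pmb{u}}\in\pmb{H}^2(\Omega)$ supplied by \eqref{regularitydual}. Writing $\pmb{e}=\pmb{u}-\pmb{u}_h\in\pmb{H}_0(\mathrm{curl};\Omega)$ and using the $\pmb{H}^{-1}(\mathrm{div})$ pairing recalled before \eqref{regularitydual}, I would start from $\langle\curlc\pmb{e},\pmb{e}\rangle=\fanshu[0]{\curl\pmb{e}}^2$. Since $\curl\widetilde{\pmb{u}}\in\pmb{H}^2(\Omega)$, both $\Delta\curl\widetilde{\pmb{u}}$ and $\partial_n\curl\widetilde{\pmb{u}}$ are single valued across interior faces; integrating $a_h(\widetilde{\pmb{u}},\pmb{e})$ by parts element by element and invoking $-\curl\Delta\curl\widetilde{\pmb{u}}=\curlc\pmb{e}$ together with $\pmb{e}\times\pmb{n}=0$ on $\partial\Omega$ then yields
\begin{align*}
\fanshu[0]{\curl\pmb{e}}^2=a_h(\widetilde{\pmb{u}},\pmb{e})-\sum_{F\in\face}\langle\partial_n\curl\widetilde{\pmb{u}},\jump{\curl\pmb{e}}\rangle_F,
\end{align*}
where on $\partial\Omega$ the boundary conditions of \eqref{dualproble} and the definition \eqref{definitionUh0} of $\pmb{U}_{h,0}$ keep the boundary contributions in the same form. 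Thus the error splits into a Galerkin part $a_h(\widetilde{\pmb{u}},\pmb{e})$ and a dual consistency part given by the face sum.

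For the Galerkin part I would insert the interpolant, $a_h(\widetilde{\pmb{u}},\pmb{e})=a_h(\widetilde{\pmb{u}}-\pmb{\Pi}_{U}\widetilde{\pmb{u}},\pmb{e})+a_h(\pmb{\Pi}_{U}\widetilde{\pmb{u}},\pmb{e})$. The first term is controlled by Cauchy--Schwarz, the interpolation bound \eqref{errorH1PiU} applied to $\widetilde{\pmb{u}}$ (with $s=2$, giving one power of $h$), and the energy estimate of Theorem \ref{theoremerrorcurlcurl} for $\fanshu[0]{\nabla_h\curl\pmb{e}}$. For the second term, since $\widetilde{\pmb{u}}$ satisfies the homogeneous boundary conditions of \eqref{dualproble} we have $\pmb{\Pi}_{U}\widetilde{\pmb{u}}\in\pmb{U}_{h,0}$, so the discrete equation \eqref{equationahfh} and the symmetry of $a_h$ give $a_h(\pmb{\Pi}_{U}\widetilde{\pmb{u}},\pmb{u}_h)=(\pmb{f},\pmb{\Pi}_{U}\widetilde{\pmb{u}})$; hence $a_h(\pmb{\Pi}_{U}\widetilde{\pmb{u}},\pmb{e})=a_h(\pmb{u},\pmb{\Pi}_{U}\widetilde{\pmb{u}})-(\pmb{f},\pmb{\Pi}_{U}\widetilde{\pmb{u}})$ is exactly the primal consistency functional of Lemma \ref{Lemmaconsistencyerr} evaluated at $\pmb{\Pi}_{U}\widetilde{\pmb{u}}$.

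The heart of the argument is to show that both the dual consistency face sum and this primal consistency functional are $O(h^{s}\fanshu[0]{\curl\pmb{e}})$, i.e. one order better than the energy estimate. The mechanism is the vanishing-moment property of Lemma \ref{Lemmajumpcurl}: because $\jump{\curl\pmb{v}_h}$ is $\pmb{L}^2(F)$-orthogonal to $\pmb{P}^{k-1}(F)$, and in particular to constants, I may subtract the $\pmb{P}^{k-1}$-projection of the smooth factor ($\partial_n\curl\widetilde{\pmb{u}}$, resp. $\partial_n\curl\pmb{u}$) and, crucially, estimate the jump by its deviation from its face mean. Since $\curl\pmb{u}$ and $\curl\widetilde{\pmb{u}}$ are single valued, $\jump{\curl\pmb{e}}=\jump{\curl(\pmb{u}_h-\pmb{u})}$ and $\jump{\curl\pmb{\Pi}_{U}\widetilde{\pmb{u}}}=\jump{\curl(\pmb{\Pi}_{U}\widetilde{\pmb{u}}-\widetilde{\pmb{u}})}$ both have zero mean, so the mean-zero trace/Poincar\'e estimate $\fanshu[0,F]{\jump{\curl\pmb{w}}}\lesssim h^{1/2}\fanshu[0,\omega_F]{\nabla_h\curl\pmb{w}}$ furnishes an extra half power of $h$ over the crude trace bound. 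Combining the approximation of the smooth factor ($h^{1/2}|\curl\widetilde{\pmb{u}}|_2$ for the dual term, $h^{s-3/2}|\curl\pmb{u}|_s$ for the primal one) with the jump bounds ($h^{1/2}\fanshu[0]{\nabla_h\curl\pmb{e}}$ together with $\fanshu[0]{\nabla_h\curl\pmb{e}}\lesssim h^{s-1}(\ldots)$ from Theorem \ref{theoremerrorcurlcurl}, and $h^{3/2}|\curl\widetilde{\pmb{u}}|_2$ from \eqref{errorH1PiU}) produces the desired $h^{s}$ scaling after summation over faces.

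Finally I would collect the three contributions, invoke the dual regularity \eqref{regularitydual} in the form $|\curl\widetilde{\pmb{u}}|_2\lesssim\fanshu[0]{\curl\pmb{e}}$, and cancel one factor of $\fanshu[0]{\curl\pmb{e}}$ to conclude \eqref{errorHcurl}. The main obstacle is precisely the sharp treatment of the two consistency terms: bounding them naively through $\norma{\pmb{\Pi}_{U}\widetilde{\pmb{u}}}$ only reproduces the energy rate $h^{s-1}$, and the gain of the extra order hinges on using the vanishing-moment property of the curl-jumps in tandem with the mean-zero trace inequality rather than a plain trace estimate.
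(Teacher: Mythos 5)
Your proposal is correct and follows essentially the same route as the paper: the same duality argument with the dual problem \eqref{dualproble}, the same three-term splitting $a_h(\widetilde{\pmb{u}}-\pmb{\Pi}_{U}\widetilde{\pmb{u}},\pmb{u}-\pmb{u}_h)$ plus dual and primal consistency terms (the latter obtained exactly as in the paper by converting $a_h(\pmb{\Pi}_{U}\widetilde{\pmb{u}},\pmb{u}-\pmb{u}_h)$ via \eqref{equationahfh}), and the same use of Lemma \ref{Lemmajumpcurl} to subtract $\pmb{Q}_F^{k-1}$-projections and gain the extra order. Your write-up merely makes explicit the mean-zero trace/Poincar\'e bookkeeping that the paper compresses into the estimates \eqref{estimate2}--\eqref{estimate3}.
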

\begin{proof}
Multiplying both sides of the first equation of \eqref{dualproble} by $\pmb{u}-\pmb{u}_h$ and integrating by parts, we get
\begin{align*}
\fanshu[0]{\curl(\pmb{u}-\pmb{u}_h)}^2&=a_h(\widetilde{\pmb{u}}-\pmb{\Pi}_{U}\widetilde{\pmb{u}},\pmb{u}-\pmb{u}_h)+a_h(\pmb{\Pi}_{U}\widetilde{\pmb{u}},\pmb{u}-\pmb{u}_h)\\
&-\sum_{K\in\mesh}\langle\partial_n\curl\widetilde{\pmb{u}},\curl(\pmb{u}-\pmb{u}_h)\rangle_{\partial K},
\end{align*}
Integrating by parts for the second term of the above equation and using the first equation of \eqref{discretecurlLapcurlproblem}, we obtain
\begin{align}\label{equation2}
\fanshu[0]{\curl(\pmb{u}-\pmb{u}_h)}^2&=a_h(\widetilde{\pmb{u}}-\pmb{\Pi}_{U}\widetilde{\pmb{u}},\pmb{u}-\pmb{u}_h)-\sum_{K\in\mesh}\langle\partial_n\curl\widetilde{\pmb{u}},\curl(\pmb{u}-\pmb{u}_h)\rangle_{\partial K}\\
&+\sum_{K\in\mesh}\langle\curl(\pmb{\Pi}_{U}\widetilde{\pmb{u}}),\partial_n\curl\pmb{u}\rangle_{\partial K}.
\end{align}
It follows from Theorem \ref{theoreminterpo}, Theorem \ref{theoremerrorcurlcurl} and \eqref{regularitydual} that
\begin{align}\label{estimate1}
a_h(\widetilde{\pmb{u}}-\pmb{\Pi}_{U}\widetilde{\pmb{u}},\pmb{u}-\pmb{u}_h)&\lesssim\fanshu{\nabla_h\curl(\widetilde{\pmb{u}}-\pmb{\Pi}_{U}\widetilde{\pmb{u}})}\fanshu{\nabla_h\curl(\pmb{u}-\pmb{u}_h)}\notag\\
&\lesssim h^s|\curl\widetilde{\pmb{u}}|_2(|\pmb{u}|_{s}+|\curl\pmb{u}|_{s})\notag\\
&\lesssim h^s\fanshu[0]{\curl(\pmb{u}-\pmb{u}_h)}(|\pmb{u}|_{s}+|\curl\pmb{u}|_{s}).
\end{align}
Thanks to Lemma \ref{Lemmajumpcurl}, Theorem \ref{theoreminterpo} and Theorem \ref{theoremerrorcurlcurl}, we have
\begin{align}\label{estimate2}
\sum_{K\in\mesh}\langle\partial_n\curl\widetilde{\pmb{u}},\curl(\pmb{u}-\pmb{u}_h)\rangle_{\partial K}&=\sum_{F\in\face}\langle\partial_n\curl\widetilde{\pmb{u}}-\pmb{Q}_F^{k-1}\partial_n\curl\widetilde{\pmb{u}},\jump{\curl(\pmb{u}-\pmb{u}_h)}\rangle_{F}\notag\\
&\lesssim h^s|\curl\widetilde{\pmb{u}}|_2(|\pmb{u}|_{s}+|\curl\pmb{u}|_{s})\notag\\
&\lesssim h^s\fanshu[0]{\curl(\pmb{u}-\pmb{u}_h)}(|\pmb{u}|_{s}+|\curl\pmb{u}|_{s}).
\end{align}
Similarly we derive
\begin{align}\label{estimate3}
\sum_{K\in\mesh}\langle\curl(\pmb{\Pi}_{U}\widetilde{\pmb{u}}),\partial_n\curl\pmb{u}\rangle_{\partial K}&=\sum_{F\in \face}\langle\jump{\curl(\pmb{\Pi}_{U}\widetilde{\pmb{u}})-\widetilde{\pmb{u}}},\partial_n\curl\pmb{u}-\pmb{Q}_F^{k-1}\partial_n\curl\pmb{u}\rangle_F\notag\\
&\lesssim h^s|\curl\widetilde{\pmb{u}}|_2|\curl\pmb{u}|_{s}\notag\\
&\lesssim h^s\fanshu[0]{\curl(\pmb{u}-\pmb{u}_h)}|\curl\pmb{u}|_{s}.
\end{align}
Combing \eqref{equation2}-\eqref{estimate3}, we arrive at \eqref{errorHcurl}.
\end{proof}

We end this section by considering an estimate for $\fanshu[0]{\pmb{u}-\pmb{u}_h}$.
\begin{theorem}\label{theoremErrL2}
Under the conditions of Theorem \ref{theoremErrHcurl}, we have
\begin{align}
\fanshu[0]{\pmb{u}-\pmb{u}_h}\lesssim h^s(|\pmb{u}|_{s}+|\curl\pmb{u}|_{s}).\label{errL2}
\end{align}
\end{theorem}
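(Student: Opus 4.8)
The plan is to avoid introducing a new dual problem and instead to combine the $\pmb{H}(\curl)$ estimate already obtained in Theorem \ref{theoremErrHcurl} with the discrete Friedrichs inequality of Lemma \ref{discreteFri}. First I would split the error through the interpolant,
\begin{align*}
\fanshu[0]{\pmb{u}-\pmb{u}_h}\le\fanshu[0]{\pmb{u}-\pmb{\Pi}_{U}\pmb{u}}+\fanshu[0]{\pmb{\theta}},\qquad \pmb{\theta}:=\pmb{\Pi}_{U}\pmb{u}-\pmb{u}_h,
\end{align*}
and bound the first term directly by the optimal interpolation estimate \eqref{errorL2PiU}. Before touching $\pmb{\theta}$ I would verify that $\pmb{\theta}\in\pmb{U}_{h,0}$: because $\pmb{u}\times\pmb{n}=0$ and $(\curl\pmb{u})\times\pmb{n}=0$ on $\partial\Omega$, the interpolation conditions \eqref{nonconInterp1}--\eqref{nonconInterp3} force $\pmb{\Pi}_{U}\pmb{u}$ to have vanishing tangential trace and to satisfy the boundary jump condition in \eqref{definitionUh0}, so $\pmb{\Pi}_{U}\pmb{u}\in\pmb{U}_{h,0}$ and hence $\pmb{\theta}\in\pmb{U}_{h,0}$.

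The main obstacle is that $\pmb{\theta}$ is not discretely divergence-free, so Lemma \ref{discreteFri} does not apply to it directly: the exact solution is pointwise divergence-free while $\pmb{u}_h$ is only so in the discrete sense $b(\pmb{u}_h,q_h)=0$. To isolate this mismatch I would use the discrete Helmholtz splitting afforded by $\nabla W_{h,0}\subset\pmb{U}_{h,0}$, defining $\chi_h\in W_{h,0}$ by $(\nabla\chi_h,\nabla q_h)=(\pmb{\theta},\nabla q_h)$ for all $q_h\in W_{h,0}$ and setting $\pmb{\theta}_0:=\pmb{\theta}-\nabla\chi_h$. Then $b(\pmb{\theta}_0,q_h)=0$, i.e. $\pmb{\theta}_0\in\pmb{Z}_h$, and the Remark after Lemma \ref{discreteFri} gives
\begin{align*}
\fanshu[0]{\pmb{\theta}_0}\lesssim\fanshu[0]{\curl\pmb{\theta}_0}=\fanshu[0]{\curl\pmb{\theta}},
\end{align*}
since $\curl\nabla\chi_h=0$. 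The right-hand side is then controlled by $\fanshu[0]{\curl(\pmb{u}-\pmb{\Pi}_{U}\pmb{u})}+\fanshu[0]{\curl(\pmb{u}-\pmb{u}_h)}$, which is of order $h^{s}$ by \eqref{errorH1PiU} and Theorem \ref{theoremErrHcurl}.

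It remains to estimate the gradient part. Choosing $q_h=\chi_h$ in the definition of $\chi_h$ yields $\fanshu[0]{\nabla\chi_h}^2=(\pmb{\theta},\nabla\chi_h)=(\pmb{\Pi}_{U}\pmb{u}-\pmb{u}_h,\nabla\chi_h)$, and here I would exploit both divergence conditions at once: $(\pmb{u}_h,\nabla\chi_h)=0$ because $\pmb{u}_h\in\pmb{Z}_h$, while $(\pmb{u},\nabla\chi_h)=0$ because $\divv\pmb{u}=0$ and $\chi_h\in H_0^1(\Omega)$. Inserting $\pmb{u}$ turns the expression into an interpolation error,
\begin{align*}
\fanshu[0]{\nabla\chi_h}^2=(\pmb{\Pi}_{U}\pmb{u}-\pmb{u},\nabla\chi_h)\le\fanshu[0]{\pmb{\Pi}_{U}\pmb{u}-\pmb{u}}\,\fanshu[0]{\nabla\chi_h},
\end{align*}
so that $\fanshu[0]{\nabla\chi_h}\lesssim\fanshu[0]{\pmb{u}-\pmb{\Pi}_{U}\pmb{u}}\lesssim h^{s}(|\pmb{u}|_{s}+|\curl\pmb{u}|_{s})$ by \eqref{errorL2PiU}. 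Collecting the bounds on $\fanshu[0]{\pmb{u}-\pmb{\Pi}_{U}\pmb{u}}$, $\fanshu[0]{\pmb{\theta}_0}$ and $\fanshu[0]{\nabla\chi_h}$ through $\fanshu[0]{\pmb{\theta}}\le\fanshu[0]{\pmb{\theta}_0}+\fanshu[0]{\nabla\chi_h}$ yields \eqref{errL2}. The genuinely delicate step is precisely the divergence mismatch; the discrete Helmholtz decomposition confines it to $\nabla\chi_h$, and pairing against $\nabla\chi_h$ reduces it to the $\pmb{L}^2$ interpolation error, which is already of optimal order $h^{s}$.
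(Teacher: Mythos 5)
Your proof is correct, but it takes a genuinely different route from the paper's. The paper (following the strategy of \cite{Sun2016MixedFemQuad}) introduces an auxiliary \emph{conforming} discrete Maxwell problem \eqref{discreteMaxwell} with data $\pmb{g}=\curlc\pmb{u}$, whose solution $\overline{\pmb{u}}_h$ approximates the same exact solution $\pmb{u}$ at rate $h^s$ in the $\pmb{H}(\mathrm{curl})$ norm by standard conforming edge-element theory (this is precisely where $\pmb{U}_h\subset\pmb{H}(\mathrm{curl};\Omega)$ is exploited); it then applies the discrete Friedrichs inequality of Lemma \ref{discreteFri} to $\overline{\pmb{u}}_h-\pmb{u}_h\in\widetilde{\pmb{Z}}_h$ and concludes via \eqref{errMaxwell} and \eqref{errorHcurl}. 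You instead stay entirely within the quad-curl discretization: you compare $\pmb{u}_h$ with $\pmb{\Pi}_U\pmb{u}$ and confine the divergence mismatch to a discrete Helmholtz decomposition $\pmb{\theta}=\pmb{\theta}_0+\nabla\chi_h$, using $\nabla W_{h,0}\subset\pmb{U}_{h,0}$ (a fact the paper itself invokes for the inf-sup condition) so that $\pmb{\theta}_0\in\pmb{Z}_h$ and the Remark following Lemma \ref{discreteFri} applies, and then eliminating the gradient part by testing with $\nabla\chi_h$ and using both $b(\pmb{u}_h,\chi_h)=0$ and $(\pmb{u},\nabla\chi_h)=0$, the latter from $\divv\pmb{u}=0$ and $\chi_h\in H_0^1(\Omega)$; all steps check out, including the reduction $\fanshu[0]{\curl\pmb{\theta}_0}=\fanshu[0]{\curl\pmb{\theta}}$ and the final appeal to \eqref{errorL2PiU}, \eqref{errorH1PiU} and Theorem \ref{theoremErrHcurl}. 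Both arguments hinge on the same two ingredients---Lemma \ref{discreteFri} and the $O(h^s)$ curl-error of Theorem \ref{theoremErrHcurl}---but yours buys self-containedness: no well-posedness or error analysis of a second discrete problem is required, only estimates already established. The paper's route buys modularity instead, delegating the $h^s$ rate to classical Maxwell theory. Two small points you share with the paper rather than introduce as gaps: your claim $\pmb{\Pi}_U\pmb{u}\in\pmb{U}_{h,0}$ (from the homogeneous boundary conditions and \eqref{nonconInterp1}--\eqref{nonconInterp3}) is exactly what the paper already assumes in the proof of Theorem \ref{theoremerrorcurlcurl}, and your use of the Friedrichs inequality on $\pmb{Z}_h$ carries the same ``$h$ small enough'' proviso as Lemma \ref{discreteFri} does in the paper's own proof.
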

\begin{proof}
We shall adopt the same procedure as in the proof of \cite[Theorem 6]{Sun2016MixedFemQuad}. Let $\overline{\pmb{u}}_h\in\pmb{U}_{h}\cap \pmb{H}_0(\mathrm{curl};\Omega)$ is the solution of the following finite element method with $\pmb{g}=\curlc\pmb{u}$
\begin{equation}\label{discreteMaxwell}
\begin{aligned}
(\curl\overline{\pmb{u}}_h,\curl\pmb{v}_h)+(\nabla\pmb{v}_h,\overline{p}_h)&=(\pmb{g},\pmb{v}_h)\quad\forall\pmb{v}_h\in\pmb{U}_{h}\cap \pmb{H}_0(\mathrm{curl};\Omega),\\
(\nabla\overline{\pmb{u}}_h,q_h)&=0\qquad\quad\ \forall\pmb{q}_h\in\pmb{W}_{h,0}.
\end{aligned}
\end{equation}
Since $\nabla q_h\in \pmb{U}_{h}\cap\pmb{H}_0(\mathrm{curl};\Omega),\ \forall q_h\in W_{h,0}$, we have the inf-sup condition
\begin{align*}
\sup_{\pmb{0}\neq\pmb{v}_h\in\pmb{U}_h}\frac{b(\pmb{v}_h,q_h)}{\fanshu[H(\mathrm{curl};\Omega)]{\pmb{v}_h}}\ge \frac{b(\nabla q_h,q_h)}{\fanshu[H(\mathrm{curl};\Omega)]{\nabla q_h}} = |q_h|_1 \quad\forall q_h\in W_{h,0}.
\end{align*}
Then combing the above inf-sup condition and Lemma \ref{discreteFri}, we conclude that \eqref{discreteMaxwell} is well-posed. Since $\pmb{U}_{h}\subset\pmb{H}(\mathrm{curl};\Omega)$, \eqref{discreteMaxwell} is a conforming method for the Maxwell's equations whose exact solution is $\pmb{u}$. Therefore by standard finite element framework and Theorem \ref{theoreminterpo}, we have the following error estimate
\begin{align}
\fanshu[H(\mathrm{curl};\Omega)]{\pmb{u}-\overline{\pmb{u}}_h}\lesssim h^s(|\pmb{u}|_{s}+|\curl\pmb{u}|_{s}).\label{errMaxwell}
\end{align}
Then using the triangle inequality and Lemma \ref{discreteFri} we derive
\begin{align*}
\fanshu[0]{\pmb{u}-\pmb{u}_h}&\le \fanshu[0]{\pmb{u}-\overline{\pmb{u}}_h}+\fanshu[0]{\overline{\pmb{u}}_h-\pmb{u}_h}\\
&\lesssim\fanshu[0]{\pmb{u}-\overline{\pmb{u}}_h}+\fanshu[0]{\curl(\overline{\pmb{u}}_h-\pmb{u}_h)}\\
&\lesssim\fanshu[H(\mathrm{curl};\Omega)]{\pmb{u}-\overline{\pmb{u}}_h}+\fanshu[0]{\curl(\pmb{u}-\pmb{u}_h)}.
\end{align*}
Finally \eqref{errL2} follows from the above estimate, \eqref{errMaxwell} and \eqref{errorHcurl}.
\end{proof}

\section{Numerical experiments}\label{sectionnumerical}
In this section, we report some numerical results of our new nonconforming methods for the quad-curl equation \eqref{curlLapcurlproblem}. Our algorithms are implemented by the iFEM package \cite{chen2008ifem}.
We demonstrate the following relative errors:
\begin{align*}
E_{L^2}&=\fanshu[0]{\pmb{u}-\pmb{u}_h}/\fanshu[0]{\pmb{u}},\\
E_{\mathrm{curl}}&=\fanshu[0]{\nabla\times(\pmb{u}-\pmb{u}_h)}/\fanshu[0]{\curl\pmb{u}},\\
E_{\mathrm{grad}~\mathrm{curl}}&=\fanshu[0]{\nabla_h\curl{(\pmb{u}-\pmb{u}_h)}}/\fanshu[0]{\gcurl\pmb{u}}.
\end{align*}
\subsection{Example 1}
We consider the quad-curl problem \eqref{curlLapcurlproblem} on the unit cube $\Omega=(0,1)^3$. The right-hand side $\pmb{f}$ is determined by the exact solution
\begin{align}
\pmb{u}=\left(
          \begin{array}{c}
            \sin(\pi x)^3\sin(\pi y)^2\sin(\pi z)^2\cos(\pi y)\cos(\pi z) \\
            \sin(\pi y)^3\sin(\pi z)^2\sin(\pi x)^2\cos(\pi z)\cos(\pi x) \\
          -2\sin(\pi z)^3\sin(\pi x)^2\sin(\pi y)^2\cos(\pi x)\cos(\pi y) \\
          \end{array}
        \right).
\end{align}
The mesh $\mesh$ is obtained by dividing the unit cube into $N^3$ small cubes and then partition each small cube into $6$ tetrahedra. The convergence results for $k=1$ are presented in Table \ref{tableexample1}. We see that all three asymptotic convergence rates are optimal, which confirms the theoretical analysis in Theorems \ref{theoremerrorcurlcurl}, \ref{theoremErrHcurl} and \ref{theoremErrL2}.
\begin{table}[!htbp]\centering
\caption{Example 1: Numerical results by the lowest-order nonconforming grad-curl element ($k=1$)\label{tableexample1}}
\scalebox{1}[1]{
\begin{tabular}{*{8}{c@{\extracolsep{5pt}}@{\extracolsep{5pt}}}}\hline
$N$&$h$ &$E_{L^2}$&  rate & $E_{\rm{curl}}$&  rate  & $E_{\mathrm{grad}~\mathrm{curl}}$ &  rate \\\hline
8 & 0.2165& 1.567e-01&      & 1.797e-01&      & 4.552e-01&      \\
10 & 0.1732& 1.071e-01& 1.71& 1.241e-01& 1.66& 3.759e-01& 0.86\\
12 & 0.1443& 7.723e-02& 1.79& 9.012e-02& 1.76& 3.189e-01& 0.90\\
14 & 0.1237& 5.812e-02& 1.84& 6.811e-02& 1.82& 2.765e-01& 0.93\\
16 & 0.1083& 4.523e-02& 1.88& 5.315e-02& 1.86& 2.437e-01& 0.94\\
18 & 0.0962& 3.615e-02& 1.90& 4.257e-02& 1.89& 2.178e-01& 0.96\\
20 & 0.0866& 2.952e-02& 1.92& 3.482e-02& 1.91& 1.968e-01& 0.96\\
22 & 0.0787& 2.455e-02& 1.93& 2.899e-02& 1.92& 1.794e-01& 0.97\\
\hline
\end{tabular}}
\end{table}

\subsection{Example 2}
We now consider nonhomogeneous boundary condition on the unit cube $\Omega=(0,1)^3$, with the exact solution
\begin{align}
\pmb{u}=\left(
          \begin{array}{c}
            \sin(y)\sin(z) \\
            \sin(z)\sin(x) \\
            \sin(x)\sin(y) \\
          \end{array}
        \right).
\end{align}
It is easy to check that
\begin{align}
\pmb{u}\times\pmb{n}|_{\partial\Omega}\neq\pmb{0},\ \curl\pmb{u}|_{\partial\Omega}\neq\pmb{0}.
\end{align}

We use the same meshes as the previous example. Numerical results are presented in Table \ref{tableexample2}. We observe that all errors converge with optimal rates. Although our theory (Theorems 4.1-4.3) does not cover the case of nonhomogeneous boundary condition, our method still works for this case.
\begin{table}[!htbp]\centering
\caption{Example 2: Numerical results by the lowest-order nonconforming grad-curl element ($k=1$)\label{tableexample2}}
\scalebox{1}[1]{
\begin{tabular}{*{8}{c@{\extracolsep{5pt}}@{\extracolsep{5pt}}}}\hline
$N$&$h$ &$E_{L^2}$&  rate & $E_{\rm{curl}}$&  rate  & $E_{\mathrm{grad}~\mathrm{curl}}$ &  rate \\\hline
8 & 0.2165& 4.164e-03&      & 7.521e-03&      & 1.079e-01&      \\
10 & 0.1732& 2.665e-03& 2.00& 4.819e-03& 1.99& 8.635e-02& 1.00\\
12 & 0.1443& 1.851e-03& 2.00& 3.349e-03& 2.00& 7.198e-02& 1.00\\
14 & 0.1237& 1.360e-03& 2.00& 2.461e-03& 2.00& 6.172e-02& 1.00\\
16 & 0.1083& 1.041e-03& 2.00& 1.885e-03& 2.00& 5.401e-02& 1.00\\
18 & 0.0962& 8.225e-04& 2.00& 1.490e-03& 2.00& 4.802e-02& 1.00\\
20 & 0.0866& 6.662e-04& 2.00& 1.207e-03& 2.00& 4.323e-02& 1.00\\
22 & 0.0787& 5.506e-04& 2.00& 9.977e-04& 2.00& 3.930e-02& 1.00\\
\hline
\end{tabular}}
\end{table}
\section*{Acknowledgments}
The authors would like to thank anonymous referees for their valuable comments. This work is supported in part by the National Natural Science Foundation of China grants NSFC 11871092, 12131005, and NSAF U1930402.
\bibliographystyle{siam}
\bibliography{QuadCurl}
\end{document}